\begin{document}

\title{Moduli spaces of arrangements of $12$ projective lines with a sextic point }
%\author[a]{Meirav Amram}
%\author[b]{Eran Lieberman}
%\author[c]{Sheng-Li Tan }
%\author[b]{Mina Teicher}
%\author[b]{Xiao-Hang Wu \thanks{Corresponding author: wuxiaoh001@foxmail.com}}
%\affil[a]{Shamoon College of Engineering, Ashdod, Israel}
%\affil[b]{Emmy Noether Research Institute for Mathematics, Bar-Ilan University, Ramat-Gan, Israel}
%\affil[c]{School of Mathematical Sciences, East China Normal University, Shanghai, P.~R.~China}

\author{Meirav Amram,~Eran Lieberman,~Sheng-Li Tan,~Mina Teicher,~Xiao-Hang Wu$^*$}

\date{A paper from 2018}
  \maketitle
  \begin{abstract}
In this paper, we try to classify moduli spaces of arrangements of $12$ lines with sextic points. We show
that moduli spaces of arrangements of $12$ lines with sextic points can consist of more than two connected components. We also present defining equations of the arrangements whose moduli spaces are not irreducible taking quotients by the complex conjugation by supply some potential Zariski pairs.Through complex conjugation we take quotients and supply some potential Zariski pairs.
\end{abstract}

\footnote{\textbf{The paper was written along the year 2018, during the postdoctoral position of the 2nd and 5th co-authors.}}

  \section{Introduction}

  \newtheorem{Theorem}{Theorem}[section]
  \newtheorem{Corollary}{Corollary}[section]
  \newtheorem{Proposition}{Proposition}[section]
  \newtheorem{Lemma}{Lemma}[section]
   \newtheorem{Question}{Question}[section]
    \newtheorem{Example}{Example}[section]
    \newtheorem{Definition}{Definition}[section]

 A line arrangement $\mathcal{A}$  in $\mathbb{CP}^2$ is a finite collection of projective lines. The complement of the union of lines in A is denoted as $M(\mathcal{A})$. We call the set $L(\mathcal{A}) = \{\cap_{i\in S}
\mid  S\in\{1, 2,...,n\}\}$ partially ordered
by reverse inclusion the intersection lattice of $\mathcal{A}$.

 An essential topic in hyperplane arrangement theory is to study the intersection between topology of complements and combinatorics of intersection lattices. It is important to study how closely topology and combinatorics of a given arrangement are related. For line arrangements, Jiang and Yau~\cite{Jia2}~ showed that homeomorphism of the complement
 always implies lattice isomorphism. However, the converse is not true in general for line arrangements. In~\cite{Jia}~and~\cite{Wan}~, the authors found a large class of line arrangements whose intersection lattices determine topology of the complements, called nice arrangements of hyperplanes in higher dimensional projective
 spaces(see~\cite{Wan2,Wan3,Yau}~).

 We call a pair of line arrangements a $Zariski\ pair$ if they are lattice isomorphic, but the fundamental groups of their complements are different. The first $Zariski\ pair$ of line arrangements was constructed by Rybnikov~\cite{Ryb}~. On
 the other hand, combining the results of Fan~\cite{Fan}~, Garber et al.~\cite{Gar}~ proved that there is no Zariski pair of arrangements of up to $8$ real lines. This result was recently generalized to arrangements of $8$ complex lines by Nazir and Yoshinaga~\cite{Naz}~. In the same paper, Nazir and Yoshinaga also claimed that there is no Zariski pair of arrangements of $9$ complex line. A complete proof of their claim was presented in~\cite{Ye}~. Recently, Amram et al. classified arrangements of $10$ complex lines and $11$ complex lines with a quintuple point in ~\cite{Amr,Amr1,Amr2}~ and found some "potential Zariski pairs".

 Let $\mathcal{A}$ be a complex line arrangement. We define the $moduli space$ of  line arrangements with the fixed lattice $L(\mathcal{A})$ (or simply, the moduli space of $\mathcal{A}$)as
 $$\mathcal{M}_\mathcal{A}=\{\mathcal{B}\in((\mathbb{CP}^2)^*)^n\mid \mathcal{B}\sim\mathcal{A}/PGL_\mathbb{C}(2)\}$$
 where $\mathcal{B}\sim\mathcal{A}$ means $\mathcal{B}$ and $\mathcal{A}$ are lattice isomorphic. We denote by $\mathcal{M}^c_\mathcal{A}$ the quotient of $\mathcal{M}_\mathcal{A}$ under the complex conjugation. By Randell's lattice-isotopy theorem in ~\cite{Ran}~and Cohen and Suciu's theorem~\cite{Coh}~, we know that arrangements in the same connected component of the moduli space, or in two complex conjugate components, can not form Zariski pairs.

 The classification of moduli spaces consists of three steps. First, we will roughly classify intersection
lattices according the number of multiple intersection points. Second, we divide our classification into some
different cases according to different positions between quintuple point and the other multiple intersection
points. Third, we will write down defining equations involving parameters for a given intersection lattice.

This paper is structured as follows. Section 2 provides preliminaries and ideas on classifying moduli
spaces of arrangements of $12$ lines. Section 3 shows that moduli spaces of arrangements with multiple points of
high multiplicity are most likely irreducible. In Section 4 , we completely classify the arrangements
of $12$ lines with a sextic point and at least one quadruple point. In Section 5 we deal with the arrangements
of $12$ lines with a sextic point and no quadruple point.

\section*{Acknowledgement}
This research was supported by the ISF-NSFC joint research program of the 1st and 3rd co-authors (grant No. 2452/17). The paper was written in 2018, along the postdoctoral position of the 2nd and 5th co-authors, under a common supervision of the 1st and 4th co-authors. Both co-authors, the 2nd and the 5th ones, were financially supported by the ISF research grant of the 1st author.

\section{Preliminaries}

  Let $A=\{L_1, L_2,\cdots, L_n\}$ be a line arrangement in $\mathbb{CP}^2$. We say a singularity of
  $L_1 \cup L_2\cup\cdots\cup L_n $ is a multiple point of $\mathcal{A}$ if it has multiplicity of at least $3$. We call the set
   $L(\mathcal{A}) = \{\bigcap_{i\in S}L_i\mid S \subseteq\{1, 2, \ldots, n\}\}$ partially ordered by reverse inclusion in the intersection lattice of $\mathcal{A}$.

   \begin{Definition}~
Two line arrangements $\mathcal{A}_1$ and $\mathcal{A}_2$ are lattice isomorphic, denoted as $\mathcal{A}_1\sim\mathcal{A}_2$ , if their intersection lattices $L(\mathcal{A}_1)$ and $L(\mathcal{A}_2)$ are isomorphic, i.e. there is a permutation $\phi$ of ${1, 2,\ldots, n}$ such that
$$dim(\bigcap\limits_{i\in S,L_i\in\mathcal{A}_1}L_i)=dim(\bigcap\limits_{j\in \phi(S),H_j\in\mathcal{A}_2}H_j)$$
   for any nonempty subset $S\subseteq\{1,2,\ldots,n\}$.
   \end{Definition}

  \begin{Definition}~\cite{Ryb}
Let $k\in N$. We say that a line arrangement $\mathcal{A}$ is of type $C_k$ if $k$ is the
minimum number of lines in $\mathcal{A}$ containing all points of multiplicity of at least three.
\end{Definition}
\begin{Definition}~\cite{Ryb}
Let $\mathcal{A}$ be an line arrangement of type $C_3$ . Then $\mathcal{A}$ is a simple $C_3$ arrangement if there are three lines $L_1, L_2, L_3 \in \mathcal{A}$ such that all points of multiplicity of at least three are
contained in $L_1\cup L_2\cup L_3$ and one of the following holds:
\begin{enumerate}
    \item $L_1\cap L_2\cap L_3\neq\varnothing$, or
    \item $L_1\cap L_2\cap L_3=\varnothing$ and one of $L_1$ , $L_2$ , and $L_3$ contains only one multiple point apart from the other two lines.

    \end{enumerate}
\end{Definition}

\begin{Theorem}~\cite{Ryb}
Let $\mathcal{A}$ be an arrangement of $C_3$ of simple type. Then the moduli space $\mathcal{M}_{\mathcal{A}}$ is irreducible.

\end{Theorem}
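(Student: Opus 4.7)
The plan is to exhibit an explicit rational parametrization of $\mathcal{M}_{\mathcal{A}}$, showing that it is a Zariski open subset of an affine space and therefore irreducible. The three-line backbone $L_1, L_2, L_3$ guaranteed by the simple $C_3$ hypothesis is decisive: since every multiple point of $\mathcal{A}$ lies on $L_1 \cup L_2 \cup L_3$, once these three lines are fixed the remaining lines of $\mathcal{A}$ are essentially determined by their intersections with the backbone.

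First, I would normalize $L_1, L_2, L_3$ using the $PGL_3(\mathbb{C})$ action. In case (1) of the definition, where the three lines are concurrent, place them as $\{x=0\}$, $\{y=0\}$, $\{x-y=0\}$, meeting at $(0:0:1)$; in case (2), where they form a triangle, place them as the coordinate lines. The two remaining parameters in the stabilizer can be absorbed by pinning down reference multiple points on two of the $L_j$, so that no residual quotient is needed.

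Next, for each multiple point $P$ of $\mathcal{A}$, introduce a coordinate $t_P$ recording its position on the backbone line $L_j$ containing it (a point in the affine chart of $L_j\cong \mathbb{P}^1$). Any additional line $L_i$ (with $i\geq 4$) is determined by two points on it, and the lattice $L(\mathcal{A})$ prescribes which multiple points $L_i$ must pass through; two such conditions fix $L_i$ as a function of the $t_P$'s. If the lattice further requires $L_i$ to meet a third backbone line at a prescribed multiple point, this imposes a single collinearity equation among three of the $t_P$'s, which is the only nontrivial type of constraint defining $\mathcal{M}_\mathcal{A}$.

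To conclude irreducibility, note that in case (1) the parameters $t_P$ on $L_1$ and $L_2$ may be chosen freely, after which each forced multiple point on $L_3$ is uniquely determined as $L_i \cap L_3$ for the already-constructed $L_i$; the collinearity constraints are thereby resolved by a rational section, exhibiting $\mathcal{M}_{\mathcal{A}}$ as a Zariski open subset of an affine space. Case (2) is even easier because the distinguished backbone line carries only one interior multiple point, so at most one collinearity equation arises per additional line. The main obstacle is purely combinatorial bookkeeping: one must verify that the incidence data encoded by $L(\mathcal{A})$ can always be resolved in this line-by-line fashion without producing inconsistent or parasitic constraints, and the two alternatives (1) and (2) of the simple $C_3$ definition are precisely what guarantee that such a resolution exists.
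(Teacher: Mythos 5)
The paper itself offers no proof of this statement: it is quoted as a known result (and in fact the relevant source is Nazir--Yoshinaga \cite{Naz}, not \cite{Ryb}), so your proposal can only be judged on its own merits. Your overall strategy (normalize $L_1,L_2,L_3$, parametrize the remaining lines by their intersections with the backbone, show the realization space is an open subset of something irreducible) is the right general shape, but the decisive step is asserted, not proved, and as stated it is false. In case (1) you claim that the parameters of the multiple points on $L_1$ and $L_2$ "may be chosen freely, after which each forced multiple point on $L_3$ is uniquely determined as $L_i\cap L_3$." A multiple point of $\mathcal{A}$ on $L_3$ has multiplicity at least $3$, so in general at least two lines $L_a,L_b\notin\{L_1,L_2,L_3\}$ must pass through it; if each of $L_a,L_b$ is already pinned down by prescribed multiple points on $L_1$ and $L_2$, then for a free choice of those points $L_a\cap L_b$ does not lie on $L_3$ at all. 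So the lattice genuinely constrains the parameters on $L_1\cup L_2$, and no "rational section" resolves the concurrency conditions in the way you describe. Your closing sentence concedes that one must check that the incidence data "can always be resolved in this line-by-line fashion without producing inconsistent or parasitic constraints" --- but that verification \emph{is} the theorem; omitting it leaves the proof without its core.

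What actually rescues the argument, and what you never use, is the precise geometric content of the simple $C_3$ hypothesis. In case (1), choose coordinates so that $L_1,L_2,L_3$ are concurrent at $(0:1:0)$, e.g. $L_1=\{x=0\}$, $L_2=\{x=z\}$, $L_3=\{z=0\}$; writing every other line as $y=m_jx+b_j$, the condition that several lines share a multiple point on $L_1$, on $L_2$, or on $L_3$ becomes, respectively, equality of the $b_j$, of the $m_j+b_j$, or of the $m_j$ --- all \emph{linear} equations, so the realization space is a Zariski-open subset of a linear subspace, hence irreducible when nonempty. In case (2) one normalizes the triangle and the unique extra multiple point $P\in L_3$; lines missing $P$ carry at most two multiple points and contribute no closed conditions, while a line through $P$ meets $L_1$ and $L_2$ in points determined linearly by a single parameter, so again all lattice conditions are linear. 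Without an argument of this kind (or some substitute showing the constraint variety is irreducible), your proposal has a genuine gap exactly where the difficulty lies.
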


\begin{Theorem}~\cite{Ryb}
Let $\mathcal{A} = \{L_1, L_2,\cdots, L_n\}$ be a line arrangement. Assume that Ln passes
through at most $2$ multiple points. Set $\mathcal{A}'= \{L_1, L_2, \cdots , L_{n-1}\}$, and then $\mathcal{M}_{\mathcal{A}}$ is irreducible if $\mathcal{M}_{\mathcal{A}'}$ is
irreducible.

\end{Theorem}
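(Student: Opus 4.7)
The natural approach is to study the forgetful projection
$$\pi : \mathcal{M}_{\mathcal{A}} \longrightarrow \mathcal{M}_{\mathcal{A}'}, \qquad \mathcal{B} \mapsto \mathcal{B} \setminus \{L_n\},$$
which is a $PGL_3(\mathbb{C})$-equivariant morphism of quasi-projective varieties and hence descends to a well-defined morphism of moduli spaces. The plan is to show that the fibers of $\pi$ are irreducible and of constant dimension over the irreducible base $\mathcal{M}_{\mathcal{A}'}$, and then invoke the standard fibration criterion to conclude that $\mathcal{M}_{\mathcal{A}}$ is irreducible.

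The main step is to compute the typical fiber. Given $\mathcal{B}' \in \pi(\mathcal{M}_{\mathcal{A}})$, the fiber $\pi^{-1}(\mathcal{B}')$ parametrizes lines $L$ such that $\mathcal{B}' \cup \{L\}$ realizes the lattice $L(\mathcal{A})$. By hypothesis, $L_n$ passes through at most two multiple points $P_1, P_2$ of $\mathcal{A}$; each such $P_i$ is either already a multiple point of $\mathcal{B}'$, or the intersection of two specific lines of $\mathcal{B}'$ that turns into a triple point when $L$ is added. In either case, $P_i$ is a specific point of $\mathbb{CP}^2$ determined by $\mathcal{B}'$ alone. Three cases then arise. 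If $L_n$ passes through two multiple points, $L$ is forced to be the line through $P_1$ and $P_2$, so the fiber is a single point. If $L_n$ passes through exactly one multiple point $P_1$, then $L$ varies in the pencil of lines through $P_1$ subject to finitely many open conditions (no accidental extra incidences), so the fiber is a non-empty Zariski-open subset of $\mathbb{P}^1$, irreducible of dimension $1$. If $L_n$ passes through no multiple points, the fiber is a non-empty Zariski-open subset of $(\mathbb{CP}^2)^*$, irreducible of dimension $2$. In all three cases the fiber is irreducible and its dimension depends only on the combinatorial type of $L_n$ inside $\mathcal{A}$, hence is constant on $\pi(\mathcal{M}_{\mathcal{A}})$.

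The second step is to deduce irreducibility of the total space. Let $Y$ denote the closure of $\pi(\mathcal{M}_{\mathcal{A}})$ in $\mathcal{M}_{\mathcal{A}'}$; as a closed subvariety of an irreducible variety, $Y$ is irreducible. The restriction $\pi : \mathcal{M}_{\mathcal{A}} \to Y$ is then dominant with irreducible, equidimensional fibers, so the standard criterion for fibrations yields that $\mathcal{M}_{\mathcal{A}}$ is irreducible.

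The place where the hypothesis is essential, and the main obstacle were it weakened, is precisely the fiber analysis. If $L_n$ were required to pass through three or more multiple points of $\mathcal{A}$, those points would have to be collinear, which is a non-trivial algebraic condition on $\mathcal{B}' \in \mathcal{M}_{\mathcal{A}'}$; this would cut out a proper closed subscheme of the base that may be reducible or of the wrong dimension, and the equidimensionality of the fibers could fail. With at most two multiple points on $L_n$, the fiber is always a linear family — a singleton, an open subset of a pencil, or an open subset of the dual plane — and no extra constraint is imposed on $\mathcal{B}'$, so the irreducibility criterion applies directly.
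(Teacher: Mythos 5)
The paper itself gives no proof of this statement—it is quoted with a citation—so there is nothing in-paper to compare against; your argument is the standard one from the cited literature (forget $L_n$, project $\mathcal{M}_{\mathcal{A}}\to\mathcal{M}_{\mathcal{A}'}$, and identify the fibers as a point, a pencil minus finitely many points, or the dual plane minus finitely many points and lines). Your fiber analysis is correct, and the hypothesis that $L_n$ carries at most two multiple points enters exactly where it should: the points $P_1,P_2$ are determined by $\mathcal{B}'$ alone, so no closed condition is imposed on the base.

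The one step that does not hold as you state it is the concluding ``standard criterion for fibrations.'' For non-proper morphisms of quasi-projective varieties, surjectivity onto an irreducible base with irreducible fibers of constant dimension does \emph{not} imply irreducibility of the source: take $X\subset\mathbb{A}^3$ (coordinates $t,u,v$) to be the union of $\{v=0,\ t\neq 0\}$ and $\{t=0,\ v=1\}$, mapped to $\mathbb{A}^1$ by $t$; every fiber is a copy of $\mathbb{A}^1$ and the base is irreducible, yet $X$ has two components. The classical theorem of this type requires the map to be proper (or open/flat), and your detour through the closure $Y$ of the image only aggravates matters, since fibers over $Y\setminus\pi(\mathcal{M}_{\mathcal{A}})$ are empty. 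The repair is immediate and uses only what you already established: in the two-point case $\pi$ identifies $\mathcal{M}_{\mathcal{A}}$ with a nonempty Zariski-open subset of $\mathcal{M}_{\mathcal{A}'}$; in the one-point case $\mathcal{M}_{\mathcal{A}}$ is a nonempty open subset of the total space of the $\mathbb{P}^1$-bundle $\{(\mathcal{B}',L)\mid P_1(\mathcal{B}')\in L\}$ over $\mathcal{M}_{\mathcal{A}'}$; in the zero-point case it is a nonempty open subset of $\mathcal{M}_{\mathcal{A}'}\times(\mathbb{CP}^2)^*$ (working, as usual in these papers, with coordinates normalized so that the $PGL_3(\mathbb{C})$ action is eliminated). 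Each ambient space is irreducible because $\mathcal{M}_{\mathcal{A}'}$ is, nonemptiness holds because $\mathcal{A}$ itself is a realization, and a nonempty open subset of an irreducible variety is irreducible. With that substitution your proof is complete and coincides with the argument in the references.
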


We say that a line arrangement is nonreductive if each line of the arrangement passes through at least $3$
multiple points. Otherwise, we say the arrangement is reductive.
Denote by $n_r$ the number of intersection points of multiplicity $r$ . We recall the following useful results.

\begin{Lemma}\label{2}~\cite{Hir}
Let $\mathcal{A}$ be an arrangement of $k$ lines in $\mathbb{CP}^2$. Then
$$\frac{k(k-1)}{2}=\sum\limits_{r\geq2}\frac{r(r-1)n_r}{2}$$

\end{Lemma}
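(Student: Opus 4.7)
\medskip

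\noindent\textbf{Proof proposal.} The plan is to give a standard double-counting argument on unordered pairs of lines, using only the defining property of $\mathbb{CP}^2$ that any two distinct projective lines intersect in exactly one point. Concretely, I would count the set
\[
\mathcal{P} = \{\{L_i, L_j\} : 1 \leq i < j \leq k\}
\]
in two different ways and equate the results.

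First, I would count $\mathcal{P}$ directly by its cardinality: since $\mathcal{A}$ consists of $k$ distinct lines, the number of unordered pairs is simply $\binom{k}{2} = \frac{k(k-1)}{2}$, which is the left-hand side of the identity.

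Next, I would count $\mathcal{P}$ by grouping pairs according to where they meet. Because we are in $\mathbb{CP}^2$, every pair $\{L_i, L_j\}$ with $i\neq j$ meets at a unique point $p \in L(\mathcal{A})$, and conversely each intersection point $p$ of multiplicity $r$ is the meeting point of exactly $\binom{r}{2}$ unordered pairs of lines through it. Summing the contributions over all multiple points and ordinary double points (i.e.\ all points with $r \geq 2$) gives
\[
|\mathcal{P}| = \sum_{r \geq 2} \binom{r}{2}\, n_r = \sum_{r \geq 2} \frac{r(r-1)}{2}\, n_r.
\]
Equating the two expressions yields the claimed formula.

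There is essentially no obstacle here: the only subtle point worth writing down carefully is that the sum is taken over all $r \geq 2$, so that nodes ($r=2$) are included on the right-hand side, and that the partition of $\mathcal{P}$ by intersection point is well-defined precisely because in $\mathbb{CP}^2$ distinct lines are never parallel. No further input (such as any property specific to line arrangements with a sextic point) is required.
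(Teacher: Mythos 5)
Your double-counting argument is correct and complete: every pair of distinct lines in $\mathbb{CP}^2$ meets in exactly one point, a point of multiplicity $r$ absorbs exactly $\binom{r}{2}$ such pairs, and equating $\binom{k}{2}$ with $\sum_{r\geq 2}\binom{r}{2}n_r$ gives the identity. The paper itself offers no proof of this lemma (it is simply quoted from Hirzebruch), so there is nothing to contrast with; yours is the standard argument and needs no further input.
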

\begin{Theorem}\label{3}~\cite{Hir}
Let $\mathcal{A}$ be an arrangement of $k$ lines in  $\mathbb{CP}^2$.
Assume that $n_k = n_{k-1} = n_{k-2} = 0$.
Then$$n_2+\frac{3}{4}n_3\geq k+\sum\limits_{r\geq5}(2r-9)n_r.$$
\end{Theorem}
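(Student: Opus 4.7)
I would follow Hirzebruch's original strategy: pass from the arrangement to an abelian branched cover of $\mathbb{CP}^2$, and then apply the Miyaoka--Yau inequality $c_1^2 \leq 3c_2$ to its minimal smooth model. Concretely, fix defining linear forms $\ell_1,\dots,\ell_k$ for the lines of $\mathcal{A}$ and, for a chosen integer $n\geq 2$, form the Kummer cover $\pi_n\colon X_n\to\mathbb{CP}^2$ defined on the affine chart $\ell_k\neq 0$ by $y_i^n=\ell_i/\ell_k$ for $i=1,\dots,k-1$. This is a Galois cover with group $(\mathbb{Z}/n)^{k-1}$, branched with ramification index $n$ along each $L_i$, and its only singularities lie over the multiple points of $\mathcal{A}$.

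Next I would resolve the singularities of $X_n$: blow up $\mathbb{CP}^2$ once at each multiple point and pull back the cover to obtain a smooth surface $\tilde X_n$. Standard formulas for Kummer covers (see Hirzebruch and Barthel--Hirzebruch--H\"ofer) express the two Chern numbers $c_1^2(\tilde X_n)$ and $c_2(\tilde X_n)$ as explicit polynomials in $n$ whose leading $n^{k-1}$-coefficients are linear combinations of $k$ and the $n_r$'s. The combinatorial hypothesis $n_k=n_{k-1}=n_{k-2}=0$ is precisely what is needed so that, for $n\gg 0$, the canonical divisor of $\tilde X_n$ is big and nef (equivalently, $\tilde X_n$ is a smooth minimal surface of general type), so that the Miyaoka--Yau inequality applies on the nose.

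The conclusion is then obtained by substituting the explicit Chern number polynomials into $3c_2(\tilde X_n)-c_1^2(\tilde X_n)\geq 0$, dividing by $n^{k-1}$, and letting $n\to\infty$ (or matching the $n^{k-1}$-coefficients). After collecting terms, the inequality reduces exactly to
\[
n_2+\tfrac{3}{4}n_3 \geq k+\sum_{r\geq 5}(2r-9)n_r.
\]
The main obstacle in carrying this out rigorously is the bookkeeping in the Chern number computation after the blow-up resolution, since contributions from each $r$-fold point must be tracked through both $c_1^2$ and $c_2$; and one must verify that the exclusion of points of multiplicity $k,k-1,k-2$ genuinely guarantees the general-type hypothesis required for Miyaoka--Yau, so that the resulting inequality is not weakened by log-canonical corrections.
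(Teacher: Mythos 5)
The paper does not prove this statement at all; it is quoted verbatim from Hirzebruch's 1986 paper ``Singularities of algebraic surfaces and characteristic numbers'' (the reference [Hir]), so the only comparison to make is with that source. Your sketch reproduces Hirzebruch's \emph{1983} argument (Kummer cover $y_i^n=\ell_i/\ell_k$, resolve by blowing up the multiple points, apply the plain Miyaoka--Yau inequality $c_1^2\leq 3c_2$ to the smooth model, optimize in $n$). That argument is correct, but it does not yield the stated inequality: carried out faithfully, the bookkeeping gives Hirzebruch's original bound
$$n_2+\tfrac{3}{4}n_3\;\geq\;k+\sum_{r\geq 5}(r-4)\,n_r,$$
valid already under the weaker hypothesis $n_k=n_{k-1}=0$. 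The coefficient $2r-9$ is strictly larger than $r-4$ for every $r\geq 6$, so your claim that substituting the Chern-number polynomials into $3c_2-c_1^2\geq 0$ ``reduces exactly'' to the stated inequality is where the proposal breaks down: the plain smooth-model Miyaoka--Yau inequality is simply not strong enough to produce it.

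The missing idea is the refinement that is the actual content of the cited 1986 paper: one must exploit the fact that the resolved (or singular) Kummer covers carry distinguished configurations of rational curves/quotient singularities over the multiple points, and apply the strengthened Miyaoka-type inequality (Miyaoka's bound for surfaces with quotient singularities, equivalently a logarithmic/orbifold Miyaoka--Yau inequality with correction terms for these curves) rather than the bare $c_1^2\leq 3c_2$. Those correction terms are precisely what upgrade the coefficient from $r-4$ to $2r-9$, and the extra combinatorial hypothesis $n_{k-2}=0$ is tied to making that refined inequality applicable --- it is not, as you suggest, what is needed to make the smooth model of general type (the 1983 argument already runs under $n_k=n_{k-1}=0$). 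Your closing worry that log-canonical corrections might ``weaken'' the result has it backwards: without those corrections you only get the weaker 1983 inequality, so the refinement is not an obstacle to be checked away but the essential ingredient you would still need to supply.
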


\begin{Theorem}\label{1}\cite{Amr}
Let $\mathcal{A}$ be an arrangement of $n (n \geq 9)$ lines. If there is a multiple point of multiplicity $\geq n-4$,
then the moduli space $\mathcal{M}_\mathcal{A}$ is irreducible.
\end{Theorem}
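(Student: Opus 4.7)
\medskip
\noindent\textbf{Proof proposal.} The plan is to exploit the extreme concentration of lines at one point. Let $p$ be the multiple point of multiplicity $m\ge n-4$; then at most four lines of $\mathcal A$ miss $p$, and I call these the \emph{extra} lines $M_1,\dots,M_s$ with $s\le 4$. The key geometric fact is that two distinct lines through $p$ share no second point, hence every multiple point of $\mathcal A$ off $p$ is supported on at least two of the $M_i$ and on at most one line through $p$. Consequently, the number of multiple points of $\mathcal A$ lying off $p$ is at most $\binom{s}{2}\le 6$.

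The next step is to iterate the reduction theorem from the preliminaries (the second theorem cited from \cite{Ryb}): any line meeting at most two multiple points may be deleted without destroying irreducibility of $\mathcal M_{\mathcal A}$. A line through $p$ meets at most two multiple points of $\mathcal A$ iff it contains at most one off-$p$ multiple point; I would delete all such lines through $p$, and, when applicable, any extra line meeting at most two multiple points. The process terminates in a reduced sub-arrangement $\widetilde{\mathcal A}$ for which each surviving line through $p$ passes through at least two off-$p$ multiple points. Since each off-$p$ multiple point lies on at most one line through $p$ and there are at most six such points, at most $\lfloor 6/2\rfloor=3$ lines through $p$ survive; thus $\widetilde{\mathcal A}$ has at most $3+s\le 7$ lines, and $\mathcal M_{\mathcal A}$ is irreducible if $\mathcal M_{\widetilde{\mathcal A}}$ is.

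For $\widetilde{\mathcal A}$, I would set $p=[0{:}0{:}1]$ and use the residual $PGL_3(\mathbb C)$-action to fix three of its lines, so that the remaining lines through $p$ are parametrized by their slopes in $\mathbb C^{\ast}$ (distinct from one another and from the fixed slopes) while each extra line is parametrized by a point of $(\mathbb{CP}^2)^{\ast}$. Each combinatorial incidence condition (three lines through a common point) translates into a single polynomial equation; since ``the line through two prescribed points'' and ``the slope of a line at a prescribed point'' are uniquely (not multi-valued) rational functions of the data, every constrained parameter is a rational function of the free ones. Concretely, the space of configurations of the $\le 4$ extra lines subject to the prescribed incidences is a Zariski-open subset of an affine space, and $\mathcal M_{\widetilde{\mathcal A}}$ fibers over it with fibers equal to open subsets of affine spaces parametrizing the free slopes; the total space is therefore irreducible.

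The hard part will be the residual case analysis. One must enumerate the admissible combinatorial types of $\widetilde{\mathcal A}$ (at most $3+s\le 7$ lines, with at most four missing $p$) and verify case-by-case that the extra-line configuration variety is irreducible rather than a union of components with distinct intersection behaviour. With only four extra lines and their incidences heavily constrained by the high-multiplicity point $p$, this enumeration is finite and, because every forced concurrence is linear in the remaining parameters, each case yields an irreducible piece; this is the step where reducibility would a priori enter, and its elimination is the crux of the argument.
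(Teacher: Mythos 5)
First, a point of reference: this paper does not prove the statement at all; it is quoted from \cite{Amr}, so there is no in-paper argument to measure you against. Judged on its own terms, your reduction skeleton is sound and very much in the spirit of the methods this literature uses: every multiple point off the high-multiplicity point $p$ lies on at least two of the at most four lines missing $p$, hence there are at most $\binom{4}{2}=6$ such points; each lies on at most one line through $p$; repeatedly deleting lines meeting at most two multiple points (the second theorem quoted from \cite{Ryb}) leaves at most three lines through $p$ plus at most four others, so irreducibility of $\mathcal{M}_{\mathcal{A}}$ reduces to irreducibility of the moduli space of an arrangement of at most seven lines of a very constrained shape. All of that is correct.

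The gap is exactly where you place it yourself, and it is not a formality. The assertion that ``the space of configurations of the $\le 4$ extra lines subject to the prescribed incidences is a Zariski-open subset of an affine space'' and that ``every forced concurrence is linear in the remaining parameters'' is the whole content of the theorem, and stated at this level of generality it is false: incidence conditions on line arrangements routinely produce defining equations of degree $\ge 2$ whose roots split the realization space into several components --- the MacLane arrangement of $8$ lines already does this, and this very paper's Examples 5.1--5.3 exhibit $12$-line lattices whose moduli spaces are two or three points cut out by quadratic and cubic conditions on the parameter $t$. So one cannot conclude irreducibility from ``each condition is one polynomial equation''; one must either (i) actually enumerate the finitely many lattice types of the reduced $\le 7$-line arrangement (at most three lines through $p$, each containing two intersection points of the extra lines with disjoint supports, plus possible concurrences among the extra lines) and verify in each case that the constrained lines are determined by successive linear conditions (or that the lattice is unrealizable, e.g.\ the three-diagonal case is the Fano configuration), or (ii) invoke an established irreducibility criterion for the reduced arrangement, such as the class $\mathcal{C}_{\le 2}$ / simple $\mathcal{C}_3$ results quoted in Section 2, after checking the reduced arrangement falls into those classes. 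Without carrying out one of these, the proof is a plausible strategy rather than a proof; the decisive step is deferred rather than done.
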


The following lemma is well known and is used to facilitate the calculation in our paper.

\begin{Lemma}\label{jisuan}
 Let $\{L_1, L_2, L_3\}$ and $\{L_4, L_5, L_6, L_7, L_8\}$ be two pencils of lines who intersect at one point and
intersect transversally in 15 points. Then there is an automorphism of the dual projective plane such that the
8 lines under the automorphism are defined by $Y = 0,\ Y=Z,\ Y = t_1Z,\  X = 0,\ X=Z,\ X = t_2Z,\ X = t_3Z,\ X = t_4Z$.
\end{Lemma}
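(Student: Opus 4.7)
The plan is to exploit the $8$-dimensional projective group $\mathrm{PGL}_3(\mathbb{C})$ in two stages: first I normalize the two pencil centers, then I use the residual $4$-dimensional stabilizer to normalize the individual lines within each pencil.

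Let $P_1 = L_1 \cap L_2 \cap L_3$ and $P_2 = L_4 \cap L_5 \cap L_6 \cap L_7 \cap L_8$ denote the two pencil centers, which are distinct by hypothesis. Since $\mathrm{PGL}_3(\mathbb{C})$ acts transitively on ordered pairs of distinct points of $\mathbb{CP}^2$, I choose a projective automorphism sending $P_1$ to $[1:0:0]$ and $P_2$ to $[0:1:0]$. In the new coordinates every line of the first pencil has an equation $\alpha Y + \beta Z = 0$, and every line of the second pencil an equation $\alpha' X + \beta' Z = 0$.

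Next I determine the residual stabilizer of the ordered pair $([1:0:0],[0:1:0])$ in $\mathrm{PGL}_3(\mathbb{C})$. Fixing $[1:0:0]$ forces the first column of a matrix representative to be a multiple of $(1,0,0)$, and fixing $[0:1:0]$ then forces its second column to be a multiple of $(0,1,0)$; the stabilizer is therefore the $4$-dimensional subgroup given by
$$X \mapsto aX + cZ,\quad Y \mapsto bY + dZ,\quad Z \mapsto eZ, \qquad a,b,e \in \mathbb{C}^*,\ c,d \in \mathbb{C}.$$
Parametrising the first pencil by $s=Y/Z$ and the second by $t=X/Z$, direct substitution shows that the induced actions are the affine maps $s\mapsto (b/e)s+d/e$ and $t\mapsto(a/e)t+c/e$. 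Since the parameter pairs $(b,d)$ and $(a,c)$ appear independently, this produces two independent copies of the affine group $\mathrm{Aff}(\mathbb{C})$, one acting on each pencil.

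Finally, since $\mathrm{Aff}(\mathbb{C})$ is sharply $2$-transitive on $\mathbb{C}$, after a relabelling I move two designated lines of the first pencil to $\{Y=0\}$ and $\{Y=Z\}$; the third line is then forced into the form $Y=t_1Z$ for a uniquely determined $t_1 \in \mathbb{C}\setminus\{0,1\}$. The analogous normalisation in the second pencil sends two of $L_4,\dots,L_8$ to $\{X=0\}$ and $\{X=Z\}$, and the remaining three acquire equations $X=t_j Z$ for $j=2,3,4$, yielding exactly the stated normal form. The only genuinely technical step is the residual-stabilizer computation in the third paragraph, but it reduces to an elementary matrix check; everything else is parameter counting against the $8$-dimensional group $\mathrm{PGL}_3(\mathbb{C})$.
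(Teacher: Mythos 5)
Your argument is correct, and it is worth noting that the paper itself offers no proof of this lemma at all --- it is simply cited as ``well known'' --- so your two-step normalization (transitivity of $\mathrm{PGL}_3(\mathbb{C})$ on ordered pairs of distinct points, followed by the computation that the residual stabilizer acts on the two pencil parameters through two independent copies of $\mathrm{Aff}(\mathbb{C})$, each sharply $2$-transitive) is exactly the standard argument one would supply in its place. The stabilizer computation and the induced affine actions $s\mapsto (b/e)s+d/e$, $t\mapsto (a/e)t+c/e$ are correct, and the parameter count matches.

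The one point you leave implicit is why every line of either pencil can be written as $Y=sZ$ or $X=tZ$ with $s,t$ finite, i.e.\ why the line $Z=0$ joining the two centers $[1:0:0]$ and $[0:1:0]$ is not among the eight lines; your parametrizations $s=Y/Z$ and $t=X/Z$ silently exclude the parameter $\infty$. This is where the hypothesis of $15$ transversal intersection points is actually used: if some line of the first pencil passed through the center of the second pencil, it would meet all five lines $L_4,\dots,L_8$ at that single point, so the two pencils would meet in at most $2\cdot 5+1=11<15$ points, a contradiction (and symmetrically for the other pencil). With that one sentence added, the affine action fixes $\infty$ and never reintroduces the excluded line, and the rest of your argument --- sharp $2$-transitivity producing $Y=0$, $Y=Z$, $X=0$, $X=Z$, with the remaining lines acquiring parameters $t_1,\dots,t_4$ --- is airtight.
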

\section{Arrangements of $12$ lines with multiple points of multiplicity at least $7$}
\begin{Theorem}
Let $\mathcal{A}$ be an arrangement of $12$ lines with a multiple point of multiplicity $\geq7$, $\mathcal{A}$ is not reductive. Then the moduli space $\mathcal{M}_\mathcal{A}$ is irreducible.
\end{Theorem}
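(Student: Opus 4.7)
The plan is to split by the multiplicity $m$ of the highest multiple point. If $m \ge 8$, then since $n = 12$ we have $m \ge n - 4$, and Theorem \ref{1} gives irreducibility immediately. The entire work therefore lies in the case $m = 7$, which I would treat as the main case.

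In that case let $p$ be a septuple point with lines $L_1, \dots, L_7$ through it, and $L_8, \dots, L_{12}$ the remaining five. I would first observe that $p$ is the unique septuple point: any two of $L_1, \dots, L_7$ meet only at $p$, so a second point of multiplicity $\ge 7$ would require at least six of $L_8, \dots, L_{12}$, which is impossible. Then I would set coordinates so that $p = [0:0:1]$, write each $L_i$ ($i \le 7$) as $Y = c_i X$, and use $PGL_3(\mathbb{C})$ to fix three of the slopes (for example $c_1 = 0$, $c_2 = 1$, $c_3 = \infty$). Each $L_j$ ($j \ge 8$) contributes two parameters, giving an ambient parameter space of dimension $4 + 10 = 14$, on which the incidence conditions cut out the moduli space.

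The next step is the combinatorial enumeration of the lattices. Lemma \ref{2} gives the relation $21 + n_2 + 3n_3 + 6n_4 + 10n_5 + 15n_6 = 66$, Theorem \ref{3} further constrains the $n_r$, and any multiple point other than $p$ has multiplicity at most six (because it can contain at most one of $L_1, \dots, L_7$, whose pairwise intersections are confined to $p$). Together with the nonreductivity hypothesis --- each of $L_8, \dots, L_{12}$ lies on at least three multiple points --- these restrictions reduce the admissible lattices to a short list. For each lattice type I would verify irreducibility by one of three tools: (i) the reduction theorem (the second theorem of Section 2) applied to any $L_j$ passing through at most two multiple points, followed by induction on $n$; (ii) the simple-$C_3$ theorem (the first theorem of Section 2) once the configuration is recognized as such; (iii) Lemma \ref{jisuan} when $L_8, \dots, L_{12}$ concur at a quintuple point $q$, by picking any three of $L_1, \dots, L_7$ to form a $3$-pencil with this $5$-pencil, placing the eight lines in the normal form of the lemma, and then showing that the remaining four lines through $p$ are determined by rational functions of the normalized parameters.

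The main obstacle is the case analysis in $m = 7$, and within it the residual configurations where no $L_j$ can be stripped off by the reduction theorem and where $L_8, \dots, L_{12}$ are \emph{not} concurrent. In those borderline cases one must write the incidence equations explicitly in the parameters above and exhibit a rational dominant map from an affine space onto the moduli component; this should be routine but tedious. The nonreductivity hypothesis is what keeps the enumeration tractable, forcing each of the five non-pencil lines to accumulate enough multiple points that it either becomes part of a pencil at a second distinguished point (handled by Lemma \ref{jisuan}) or participates in a configuration reducible by the reduction theorem.
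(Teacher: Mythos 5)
Your handling of $m\geq 8$ coincides with the paper's (both invoke Theorem \ref{1}, since $8=12-4$), but for $m=7$ there is a genuine gap: you set up coordinates, count parameters, and announce a case-by-case enumeration of lattices to be settled by the reduction theorem, the simple $C_3$ theorem, or Lemma \ref{jisuan}, yet none of this is carried out — the "routine but tedious" residual analysis you defer is exactly the content of the theorem, so as written the proposal is a program, not a proof. Moreover, you miss the key observation that makes the whole enumeration unnecessary: a nonreductive arrangement of $12$ lines with a point of multiplicity exactly $7$ does not exist. The paper sees this in a few lines. Combining Lemma \ref{2} with Hirzebruch's inequality (Theorem \ref{3}) gives $54-26n_7\geq \frac{9}{4}(n_3+n_4+n_5+n_6)$. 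On the other hand, every multiple point other than the septuple point meets at most one of the seven lines through it, so nonreductivity (each of those seven lines carrying at least two further multiple points) forces at least $15-n_7$ multiple points of multiplicity $<7$, whence $\frac{9}{4}(15-n_7)\leq 54-26n_7$ and therefore $n_7<1$, a contradiction. Thus the $m=7$ case is vacuous and the theorem reduces entirely to the $m\geq 8$ case you already treated.

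Your auxiliary remarks (uniqueness of the septuple point, the $14$-parameter ambient space, the observation that other multiple points have multiplicity at most $6$) are correct but do not advance the argument, and the planned applications of Lemma \ref{jisuan} and the $C_3$/reduction theorems are moot once the counting above shows there is nothing to classify. If you want to salvage your route, the missing step is precisely this numerical estimate: without it you have no bound guaranteeing that your case list is finite, let alone empty.
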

\begin{proof}
Let $\mathcal{A}$ be an arrangement of $12$ lines with a multiple point of multiplicity $\geq8$. Since the theorem \ref{1}, the moduli space $\mathcal{M}_\mathcal{A}$ is irreducible. Let $\mathcal{A}$ be an arrangement of $12$ lines with a multiple point of multiplicity $7$ and no multiple points of higher multiplicities. By Lemma \ref{2} and Theorem \ref{3} we have $$54-26n_7\geq\frac{9}{4}(n_3+n_4+n_5+n_6)$$
On the other hand, it is easy to see that there must be at least $15-n_7$ multiple points of multiplicity $<7$.
Thus, $15-n_7 \leq n_3 + n_4 + n_5$.  Then we get $n_7 \leq\frac{54}{95}<1$, a contradiction.

\end{proof}

\section{Arrangements of 12 lines with a sextic point and one quadruple points}

\begin{Theorem}\label{4}
Let $\mathcal{A}$ be a nonreductive arrangement of $12$ lines in $\mathbb{CP}^2$ with a sextic point and $n_r = 0$ for
$r \geq 7$. Then $n_6 = 1, n_5=0$ and $n_4 \leq1$.

\end{Theorem}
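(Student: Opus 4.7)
The plan is to combine a pencil-of-lines geometric argument with a type decomposition of the multiple points away from the sextic point, then close the argument with an elementary pair-counting identity rather than the full force of Theorem \ref{3}. First I would show $n_6=1$. Suppose for contradiction there are two sextic points $P_1,P_2$ and let $\ell_0=\overline{P_1P_2}$. If $\ell_0\in\mathcal{A}$, label the remaining lines as five lines $\ell_1,\dots,\ell_5$ through $P_1$, five lines $m_1,\dots,m_5$ through $P_2$, and one extra line $L$. Each $\ell_i$ meets $\ell_0$ at $P_1$ and each $m_j$ meets $\ell_0$ at $P_2$, while $\ell_0\cap L$ cannot coincide with $P_1$ or $P_2$ without creating a $7$-fold point; so $\ell_0$ carries only two multiple points, contradicting nonreductivity. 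If $\ell_0\notin\mathcal{A}$, the twelve lines split as six through $P_1$ and six through $P_2$, but then any line through $P_1$ meets the six lines through $P_2$ in six distinct points (two such lines share only the point $P_2$), leaving only the single multiple point $P_1$ on it -- again a contradiction.

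With $n_6=1$ fixed, let $P$ be the sextic point, $L_1,\dots,L_6$ the six lines through $P$, and $L_7,\dots,L_{12}$ the rest. Since $L_i\cap L_{i'}=\{P\}$ for distinct $i,i'\leq 6$, every multiple point $Q\neq P$ lies on at most one of $L_1,\dots,L_6$. Call $Q$ of \emph{Type 1} if exactly one such $L_i$ passes through it, and of \emph{Type 0} otherwise; let $n_r^1,n_r^0$ denote the corresponding counts. Each $L_i$ ($i\leq 6$) carries $P$ and, by nonreductivity, at least two further multiple points, necessarily of Type 1; summing over $i$ gives
\[
n_3^1+n_4^1+n_5^1\geq 12.
\]

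Next I would count unordered pairs of lines from $\{L_7,\dots,L_{12}\}$ sharing a common arrangement point. A Type 1 point of multiplicity $r$ is incident to $r-1$ of these six lines and so accounts for $\binom{r-1}{2}$ such pairs, while a Type 0 point of multiplicity $r$ accounts for $\binom{r}{2}$. Since the total number of unordered pairs is at most $\binom{6}{2}=15$,
\[
n_3^1+3n_4^1+6n_5^1+3n_3^0+6n_4^0+10n_5^0\leq 15.
\]
Using the lower bound $n_3^1\geq 12-n_4^1-n_5^1$ from nonreductivity, this reduces to
\[
2n_4^1+5n_5^1+3n_3^0+6n_4^0+10n_5^0\leq 3,
\]
whose coefficients immediately force $n_5^0=n_4^0=n_5^1=0$ and $n_4^1\leq 1$, yielding $n_5=0$ and $n_4\leq 1$.

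The main obstacle I foresee is the first step: neither Lemma \ref{2} nor Theorem \ref{3} alone excludes $n_6=2$, so the pencil argument, together with the case distinction on whether the line through the two sextic points lies in $\mathcal{A}$, is unavoidable. Once $n_6=1$ is secured, the Type~0/Type~1 bookkeeping against the pair bound $\binom{6}{2}=15$ completes the proof without needing Hirzebruch's inequality at all.
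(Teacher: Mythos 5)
Your proof is correct, but it takes a genuinely different route from the paper's. The paper first proves $n_6=1$, $n_5\le 1$ and $n_4\le 2$ by combining the count of Lemma \ref{2} with Hirzebruch's inequality (Theorem \ref{3}) and a lower bound on the number of low-multiplicity points forced by nonreductivity, and then kills the residual cases $n_5=1$ and $n_4=2$ by ad hoc collinearity case analyses (placing the quintuple point, or a second quadruple point, relative to the sextic point and exhibiting a line with at most two multiple points). You replace all of this by elementary incidence arguments: two sextic points are excluded by inspecting the pencils through $P_1$ and $P_2$, split according to whether $\overline{P_1P_2}$ belongs to $\mathcal{A}$; and once $n_6=1$ is fixed, the Type 0/Type 1 decomposition, the nonreductivity bound $n_3^1+n_4^1+n_5^1\ge 12$, and the pair count $\binom{6}{2}=15$ among the six lines missing the sextic point give $2n_4^1+5n_5^1+3n_3^0+6n_4^0+10n_5^0\le 3$, hence $n_5=0$ and $n_4\le 1$ in one stroke, with no use of Theorem \ref{3} at all. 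Your approach even yields slightly more than the statement: $n_4^0=0$ says any quadruple point must lie on one of the six lines through the sextic point, which subsumes the paper's subsequent theorem that $\mathcal{M}_\mathcal{A}$ is empty when the sextic and quadruple points are not collinear, and $n_3^0\le 1$ anticipates the dichotomy underlying Section 5. One small caveat: your closing claim that the pencil argument for $n_6=1$ is ``unavoidable'' is not accurate, since the paper does exclude $n_6\ge 2$ from Lemma \ref{2} and Theorem \ref{3} together with the observation that there are at least $13-n_6$ further multiple points; this does not affect the validity of your proof, only the comparison.
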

\begin{proof}
By Lemma \ref{2} and Theorem \ref{3} we have $$54-18n_6\geq\frac{9}{4}(n_3+n_4+n_5).$$
On the other hand, it is easy to see that there must be at least $13-n_6$ multiple points of multiplicity $<6$. Thus,
$$\frac{9}{4}(13-n_6)\leq54-18n_6.$$
It follows that $n_6\leq1$, then $n_6=1$.

 By Lemma \ref{2} and Theorem \ref{3} we have $$36-11n_5\geq\frac{9}{4}(n_3+n_4).$$
On the other hand, it is easy to see that there must be at least $12-n_5$ multiple points of multiplicity $<5$. Thus,
$$\frac{9}{4}(12-n_5)\leq36-11n_5.$$ It follows that $n_5\leq1$. If $n_5=1$ and the quintuple point and the sextic point are not collinear,  then it is easy to see
that there is a line with at most 2 multiple points. If $n_5=1$ and the quintuple point and the sextic point are collinear.  Let
$L_1\cap L_2\cap L_3\cap L_4\cap L_5\cap L_{12}$ and $L_6\cap L_7\cap L_8\cap L_9\cap L_{12}$ be the 2 points.  and let $L_{12}$ be the line at infinity. Each of $L_{10}$ and $L_{11}$ must pass through $4$ points of $L_i\cap L_j$ , $i = 1, 2, 3, 4, 5; j = 6, 7, 8, 9$. Then one of $L_i, i=1,2,3,4,5$  at most 2 multiple points, a contradiction. Then $n_5=0$.

By Lemma \ref{2} and Theorem \ref{3} we have $$36-6n_4\geq\frac{9}{4}n_3.$$
On the other hand, it is easy to see that there must be at least $12-n_4$ multiple points of multiplicity 3. It follows that $n_4\leq2$. First, if  a sextic point and a quadruple point  are not collinear, then it is easy to see that there must be at least $12$ multiple points of multiplicity 3, it follows that $n_4\leq1$. Second, if $n_4=2$, and a sextic point and a quadruple point  are  collinear. Let
$L_1\cap L_2\cap L_3\cap L_4\cap L_5\cap L_{12}$ be a sextic point and $L_6\cap L_7\cap L_8\cap L_{12}$ be a quadruple point, and let $L_{12}$ be the line at infinity. Sice the arrangement is nonreductive, then the $L_{12}$ has the at least $3$ multiple points. If there is another quadruple point in the $L_{12}$. So $L_9\cap L_{10}\cap L_{11}\cap L_{12}\neq\varnothing$. Then Each of $L_{10}$ , $L_{11}$ and $L_{9}$ must pass through most of $4$ points of $L_i\cap L_j$ , $i = 1, 2, 3, 4, 5; j = 6, 7, 8$. Then one of $L_i, i=1,2,3,4,5$  at most 2 multiple points, a contradiction. If there is not another quintuple point in the $L_{12}$. We can assume $L_9\cap L_{10}\cap L_{12}\neq\varnothing$, $L_9\cap L_{10}\cap L_{1}\cap L_8\neq\varnothing$ and $L_{11}\cap L_{1}\cap L_7\neq\varnothing$. Then Each of $L_{10}$ and $L_{9}$ must pass through most of $2$ points of $L_i\cap L_j$ , $i = 2, 3, 4, 5; j = 6, 7$. $L_{11}$ must pass through most of $2$ points of $L_i\cap L_j$ , $i = 2, 3, 4, 5; j = 6, 8$. Then one of $L_i, i=1,2,3,4,5$  at most 2 multiple points, a contradiction. Then we get $n_4\leq1.$

\end{proof}
\begin{Theorem}
Let $\mathcal{A}$ be a nonreductive arrangement of $12$ lines in $\mathbb{CP}^2$ with $n_6 = n_4 = 1$ and $n_r = 0$ for
$r \geq 7$. If the sextic point and the quadruple point are not collinear, then $\mathcal{M}_\mathcal{A}$ is empty.
\end{Theorem}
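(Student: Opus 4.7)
The plan is to derive a contradiction by a double-count on one very restricted family of triple points. The ingredients are nonreductivity, Theorem~\ref{4} (which leaves only multiplicities $2$, $3$, $4$, and $6$ in play), and the non-collinearity hypothesis interpreted as: no line of $\mathcal{A}$ passes through both the sextic and quadruple points.

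First I would fix notation: let $L_1,\ldots,L_6$ be the six lines through the sextic point $P_6$ and $L_7,\ldots,L_{10}$ the four lines through the quadruple point $P_4$, so $L_{11},L_{12}$ are the remaining two lines. The non-collinearity hypothesis together with $n_r=0$ for $r\geq 7$, $n_5=0$, and $n_4=1$ forces $L_1,\ldots,L_6$ to miss $P_4$, $L_7,\ldots,L_{10}$ to miss $P_6$, and $L_{11},L_{12}$ to miss both distinguished points.

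The heart of the argument is to classify the multiple points on each $L_i$ ($1\le i\le 6$) other than $P_6$. Since any two of $L_1,\ldots,L_6$ intersect only at $P_6$ and any two of $L_7,\ldots,L_{10}$ only at $P_4$, such a point must be a triple point of the shape $\{L_i,L_j,L_k\}$ with $j\in\{7,\ldots,10\}$ and $k\in\{11,12\}$, \emph{unless} $L_i$ passes through $L_{11}\cap L_{12}$. In that exceptional case $L_i\cap L_{11}=L_i\cap L_{12}$ and no $L_j$ can pass through this common point (that would create a second quadruple or higher point, forbidden by Theorem~\ref{4}); consequently $L_i$ inherits only one multiple point beyond $P_6$ and nonreductivity fails. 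Hence $L_i$ must avoid $L_{11}\cap L_{12}$ and must carry exactly two triple points of the controlled shape, one at $L_i\cap L_{11}$ and one at $L_i\cap L_{12}$. A symmetric analysis pins each $L_j$ ($7\le j\le 10$) down to exactly two triple points of the same shape, one on $L_{11}$ and one on $L_{12}$.

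It remains to double-count the set $\mathcal{T}$ of triple points of the form $\{L_i,L_j,L_k\}$ with $i\in\{1,\ldots,6\}$, $j\in\{7,\ldots,10\}$, and $k\in\{11,12\}$: counting through the $L_i$ side yields $|\mathcal{T}|=6\cdot 2=12$, while counting through the $L_j$ side yields $|\mathcal{T}|=4\cdot 2=8$. The contradiction $12\neq 8$ shows $\mathcal{M}_\mathcal{A}=\varnothing$. The main obstacle is the middle step: one must exhaustively rule out triple points on $L_i$ arising from two lines in $\{L_1,\ldots,L_6\}$ or two lines in $\{L_7,\ldots,L_{10}\}$ (they would have to coincide with $P_6$ or $P_4$), and the subtle degeneration $L_i\in L_{11}\cap L_{12}$ must be handled carefully; once both are excluded, the final double count is automatic.
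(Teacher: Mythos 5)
Your argument is correct, and it reaches the contradiction by a bookkeeping scheme that differs from the paper's. The paper's proof uses nonreductivity only on the sextic pencil: after noting that the six lines through the sextic point and the four through the quadruple point are disjoint, it observes that any further multiple point on a sextic-pencil line must use one of the two remaining lines, bounds how many useful points those two lines can supply on the grid of sextic-line/quadruple-line intersections, and concludes by pigeonhole that some line through the sextic point carries at most two multiple points, contradicting nonreductivity. You instead exploit nonreductivity on \emph{both} pencils: after the same structural classification (every extra multiple point on a pencil line is a triple point of the form $L_i\cap L_j\cap L_k$ with $L_i$ in the sextic pencil, $L_j$ in the quadruple pencil, $L_k\in\{L_{11},L_{12}\}$, once the degeneration where a pencil line passes through $L_{11}\cap L_{12}$ is eliminated via nonreductivity), you pin each sextic-pencil line down to exactly two such triple points and each quadruple-pencil line down to exactly two, so the double count $6\cdot 2=12\neq 8=4\cdot 2$ finishes. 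Your version is symmetric and self-contained, and it sidesteps the paper's terse capacity claim for the two remaining lines (the straightforward bound is four grid points per line plus possibly the common point $L_{10}\cap L_{11}$ on a sextic line, which still gives at most $4+4+1=9<12$ and makes the paper's pigeonhole work); it also makes explicit the degenerate case $L_{11}\cap L_{12}\in L_i$ that the paper leaves implicit. The paper's version, on the other hand, needs nonreductivity only for the six lines through the sextic point, so it uses slightly less of the hypothesis. Both proofs rest on the same key structural fact, so the difference is one of organization rather than of underlying idea.
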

\begin{proof}
Let
$L_1\cap L_2\cap L_3\cap L_4\cap L_5\cap L_{9}$ be a sextic point and $L_6\cap L_7\cap L_8\cap L_{12}$ be a quadruple point, and let $L_{12}$ be the line at infinity. Then Each of $L_{10}$, $L_{11}$  must pass through most of $2$ points of $L_i\cap L_j$ , $i =1,  2, 3, 4, 5, 9; j = 6, 7, 8, 12$. Then one of $L_i, i=1,2,3,4,5, 9$  at most 2 multiple points, a contradiction.
\end{proof}

\begin{Theorem}
Let $\mathcal{A}$ be a nonreductive arrangement of $12$ lines in $\mathbb{CP}^2$ with $n_6 = n_4 = 1$ and $n_r = 0$ for
$r \geq 7$. If the sextic point and the quadruple point are collinear, then $\mathcal{M}_\mathcal{A}$ or $\mathcal{M}^c_\mathcal{A}$ is irreducible except in
the case of Figure 12 and the corresponding
arrangement of this figure is ¡°potential Zariski pair¡±.

\end{Theorem}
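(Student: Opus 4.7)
The plan is an exhaustive case analysis carried out after a coordinate normalization. Following the setup used in the proof of Theorem \ref{4}, I would write the sextic point as $L_1\cap L_2\cap L_3\cap L_4\cap L_5\cap L_{12}$ and the quadruple point as $L_6\cap L_7\cap L_8\cap L_{12}$, take $L_{12}$ to be the line at infinity, and apply Lemma \ref{jisuan} to normalize the two pencils as $L_1,\dots,L_5:\ X=0,\ X=Z,\ X=t_2Z,\ X=t_3Z,\ X=t_4Z$ and $L_6,L_7,L_8:\ Y=0,\ Y=Z,\ Y=t_1Z$. The three remaining lines $L_9,L_{10},L_{11}$ are then to be pinned down by combinatorial incidences. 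Lemma \ref{2} gives $3n_3+n_2=45$ and Theorem \ref{3} yields $n_3\leq 13$; nonreductivity forces $L_{12}$ to contain at least one further triple point, each of $L_1,\dots,L_5$ and each of $L_6,L_7,L_8$ to contain at least two triple points off its pencil center, and each of $L_9,L_{10},L_{11}$ to carry at least three triple points. Since $n_r=0$ for $r\geq 5$ apart from the fixed sextic, every triple point off $L_{12}$ is one of three incidence types: a sextic-pencil line, a quadruple-pencil line and a free line; two free lines and a pencil line; or the three free lines together. This reduces the enumeration to a finite list of combinatorial types modulo permutations of $\{L_1,\dots,L_5\}$, $\{L_6,L_7,L_8\}$ and $\{L_9,L_{10},L_{11}\}$.

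Second, for each combinatorial type I would translate the incidences into polynomial equations in $(t_1,t_2,t_3,t_4)$ together with the coefficients of $L_9,L_{10},L_{11}$, eliminate the free-line coefficients using the collinearity conditions, and reduce to a polynomial system in the $t$-parameters. The expected outcomes are that the system is inconsistent, so $\mathcal{M}_\mathcal{A}=\varnothing$; that it cuts out a single rational variety, so $\mathcal{M}_\mathcal{A}$ is irreducible; or that it cuts out two components exchanged by complex conjugation, so that $\mathcal{M}^c_\mathcal{A}$ is irreducible. In all such cases Randell's lattice-isotopy theorem together with the Cohen--Suciu theorem rules out Zariski pairs.

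The main obstacle is isolating the Figure 12 case. Here I expect the reduced system to descend to a univariate polynomial of degree at least two whose Galois action over $\mathbb{R}$ does not send each component to its complex conjugate, producing two distinct components of $\mathcal{M}_\mathcal{A}$ that remain distinct in $\mathcal{M}^c_\mathcal{A}$. The delicate step is verifying that no residual projective automorphism preserving the labelled intersection lattice interchanges these two components; this amounts to computing the action of the combinatorial symmetry group on the parameter space and checking that it preserves each component. Once this is done, the hypotheses of Randell and Cohen--Suciu no longer apply, and the two components give the asserted potential Zariski pair.
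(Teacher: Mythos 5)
Your plan is essentially the paper's own proof: the same normalization of the two pencils through the sextic and quadruple points via Lemma \ref{jisuan}, the same use of nonreductivity to force an intersection of two of the three free lines onto the line at infinity, and then an exhaustive enumeration of the possible incidences of $L_9,L_{10},L_{11}$, each tested by elimination in the parameters $t_1,\dots,t_4$ (the paper's Figures 1--29), with exactly one realizable configuration (Figure 12), defined over $t^3+t^2+t-1=0$, whose one real root and complex-conjugate pair of roots make $\mathcal{M}^c_\mathcal{A}$ disconnected and yield the potential Zariski pair, all other cases being empty or handled by Randell and Cohen--Suciu. The only remarks are that, as in the paper, the real substance is the deferred case-by-case computation, and your description of the exceptional case should be read as ``one real point plus a conjugate pair surviving as two points in the quotient'' rather than components not exchanged by conjugation; your extra check that no lattice-preserving symmetry identifies the remaining components is a legitimate point the paper itself leaves implicit.
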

\begin{proof}
Assume that $L_1 \cap L_2 \cap L_3 \cap L_{12}$ is the quadruple point and $L_4 \cap L_5 \cap L_6 \cap L_7 \cap L_8 \cap L_{12}$ is the sextic point. Then one of ${L_{10} \cap L_9, L_{11} \cap L_{10}, L_9 \cap L_{11}}$ is on $L_{12}$ so that it contains at least 3 multiple points. We may assume $L_{10} \cap L_{11}$ is on $L_{12}$. Because $\{L_{9}\cup L_{10}\cup L_{11}\}\cap \{L_i\cap L_j:1\leq i\leq3,4\leq j\leq8\}$ are at most $9$ points. Then  at least one multiple  point is not on
$\{L_i\cap L_j:1\leq i\leq3,4\leq j\leq8\}$ but is on $L_4 \cup L_5 \cup L_6 \cup L_7 \cup L_8$. It's easy to see that  at most two multiple points are not on
$\{L_i\cap L_j:4\leq i<j\leq8\}$ but is on $L_4 \cup L_5 \cup L_6 \cup L_7 \cup L_8$.

\begin{itemize}
\item[Case 1] Only one multiple  point is not on
$\{L_i\cap L_j:1\leq i\leq3,4\leq j\leq8\}$ but is on $L_4 \cup L_5 \cup L_6 \cup L_7 \cup L_8$. We can assume the point is $L_{9}\cap L_{10}\cap L_{4}$. Up to a lattice isomorphism, we assume $L_{9}$ passes through $\{L_3 \cap L_5, L_2 \cap L_6, L_1 \cap L_7\}$ and $L_{3}\cap L_{4}$ is on $L_{11}$.
\begin{enumerate}
\item[\rm 1)] $L_{11}$ passes through $L_2 \cap L_5$. Then $L_{11}$ passes through $L_1 \cap L_6$ or $L_1 \cap L_8$. If $L_{11}$ passes through $L_1 \cap L_6$. Since $L_8$ passes through at most 2 multiple points,a contradiction. Then $L_{11}$ passes through $L_1 \cap L_8$.
    \begin{enumerate}
    \item[\rm I]$L_{10}$ passes through $\{L_1 \cap L_6, L_3 \cap L_7, L_2 \cap L_8\}$ (Figure 1).

    By Lemma \ref{jisuan}, we can let $L_1=\{Y = 0\},\ L_2=\{Y=Z\},\ L_3=\{Y = t_1Z\},\ L_4=\{ X = 0\},\ L_5=\{X=Z\},\ L_6=\{X = t_2Z\},\ L_7=\{X = t_3Z\},\ L_8=\{X = t_4Z\}$. Then $L_{11}$ pass through $(0,t_1,1)$,$(1,1,1)$,$(t_4,0,1)$, $L_{9}$ pass through $(1,t_1,1)$,$(t_2,1,1)$,\\$(t_3,0,1)$, $L_{10}$ pass through $(t_3,t_1,1)$,$(t_4,1,1)$,$(t_2,0,1)$. Since $L_{12}$ pass through $L_{10} \cap L_{11}$, then $\frac{t_1}{t_4}=\frac{t_1-1}{t_4-t_3}$. Since $L_{4}$ pass through $L_{10} \cap L_{9}$,then $\frac{t_1t_3}{t_3-1}=1-\frac{(t_1-1)t_4}{t_3-t_4}$.After an easy computation, we conclude that Figures 1 cannot be realized.

\item[\rm II] $L_{10}$ passes through $\{L_1 \cap L_6, L_2 \cap L_7, L_3 \cap L_8\}$ (Figure 2).
\end{enumerate}
   \begin{figure}[h]
\begin{minipage}[t]{0.5\linewidth}
\centering
\includegraphics[width=6.5cm,height=4.5cm]{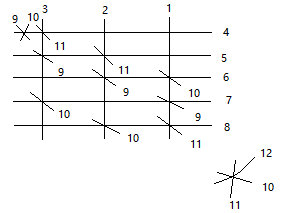}
\caption{}
\end{minipage}
\begin{minipage}[t]{0.5\linewidth}        %ֽ¼ֶ¬ױ¼׃ֳׂ»׀׀¿ם¶ָµִ45%
\hspace{2mm}
\includegraphics[width=6.5cm,height=4.5cm]{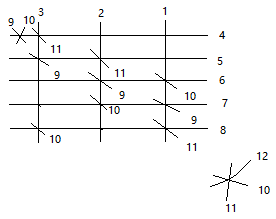}
\caption{}
\end{minipage}
\end{figure}

  After an easy computation(by Lemma \ref{jisuan}), we conclude that Figures  2 cannot be realized.
\item[\rm 2)] $L_{11}$ passes through $L_1 \cap L_5$. Then $L_{11}$ passes through $L_2 \cap L_8$ or $L_2 \cap L_7$. If $L_{11}$ passes through $L_2 \cap L_7$. Sice $L_8$ passes through at most 2 multiple points, then $L_{11}$ passes through $L_2 \cap L_8$.
    \begin{enumerate}
    \item[\rm I]$L_{10}$ passes through $\{L_3 \cap L_6, L_2 \cap L_7, L_1 \cap L_8\}$ (Figure 3).

\item[\rm II] $L_{10}$ passes through $\{L_1 \cap L_6, L_2 \cap L_7, L_3 \cap L_8\}$ (Figure 4).

 \end{enumerate}
     \begin{figure}[h]
\begin{minipage}[t]{0.5\linewidth}
\centering
\includegraphics[width=6.5cm,height=4.5cm]{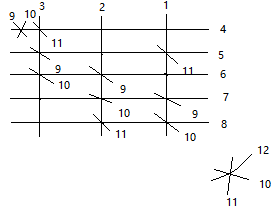}
\caption{}
\end{minipage}
\begin{minipage}[t]{0.5\linewidth}        %ֽ¼ֶ¬ױ¼׃ֳׂ»׀׀¿ם¶ָµִ45%
\hspace{2mm}
\includegraphics[width=6.5cm,height=4.5cm]{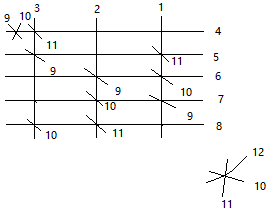}
\caption{}
\end{minipage}
\end{figure}

 After an easy computation(by Lemma \ref{jisuan}), we conclude that Figures 3 and 4 cannot be realized.
\item[\rm 3)] $L_{10}$ passes through $\{L_2 \cap L_5, L_3 \cap L_7, L_2 \cap L_8\}$ ,$L_{11}$ passes through $\{L_1 \cap L_6, L_2 \cap L_8\}$(Figure 5).
\item[\rm 4)] $L_{10}$ passes through $\{L_1 \cap L_6, L_3 \cap L_7, L_1 \cap L_8\}$ . $L_{11}$ passes through $\{L_2 \cap L_5, L_1 \cap L_8\}$(Figure 6).
    \begin{figure}[h]
\begin{minipage}[t]{0.5\linewidth}
\centering
\includegraphics[width=6.5cm,height=4.5cm]{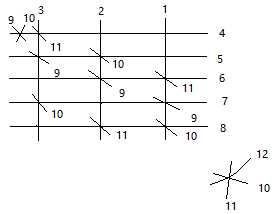}
\caption{}
\end{minipage}
\begin{minipage}[t]{0.5\linewidth}        %ֽ¼ֶ¬ױ¼׃ֳׂ»׀׀¿ם¶ָµִ45%
\hspace{2mm}
\includegraphics[width=6.5cm,height=4.5cm]{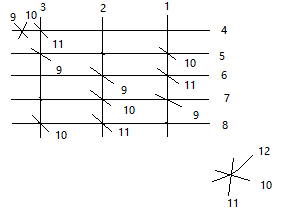}
\caption{}
\end{minipage}
\end{figure}

 After an easy computation(by Lemma \ref{jisuan}), we conclude that Figures 5 and 6 cannot be realized.
 \end{enumerate}
\item[Case 2] two multiple  point is not on
$\{L_i\cap L_j:4\leq i<j\leq8\}$ but is on $L_4 \cup L_5 \cup L_6 \cup L_7 \cup L_8$. We can assume the two points are $L_{9}\cap L_{10}\cap L_{4}$ and $L_{9}\cap L_{11}\cap L_{5}$.
\begin{itemize}
\item[Subcase 1] $L_9$ pass through 3 points of $\{L_i\cap L_j:1\leq i\leq3,4\leq j\leq8\}$. $L_{10}$ pass through 2 points of $\{L_i\cap L_j:1\leq i\leq3,4\leq j\leq8\}$. $L_{11}$ pass through 3 points of $\{L_i\cap L_j:1\leq i\leq3,4\leq j\leq8\}$. Up to a lattice isomorphism, we assume $L_{9}$ passes through $\{L_3 \cap L_6, L_2 \cap L_7, L_1 \cap L_8\}$, $L_{3}\cap L_{4}$ is on $L_{11}$.
    \begin{enumerate}
    \item[\rm 1)] $L_{10}$ passes through $L_{3}\cap L_{5}$

        \begin{enumerate}
        \item[\rm I)]$L_{10}$ passes through $ L_2 \cap L_8$. $L_{11}$ passes through $\{L_1 \cap L_7, L_2 \cap L_6\}$(Figure 7).

\item[\rm II)] $L_{10}$ passes through $ L_1 \cap L_7$. $L_{11}$ passes through $\{L_1 \cap L_6, L_2 \cap L_8\}$(Figure 8).
    \begin{figure}[h]
\begin{minipage}[t]{0.5\linewidth}
\centering
\includegraphics[width=6.5cm,height=4.5cm]{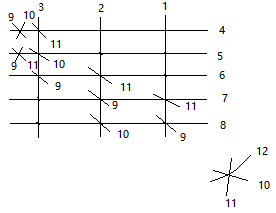}
\caption{}
\end{minipage}
\begin{minipage}[t]{0.5\linewidth}        %ֽ¼ֶ¬ױ¼׃ֳׂ»׀׀¿ם¶ָµִ45%
\hspace{2mm}
\includegraphics[width=6.5cm,height=4.5cm]{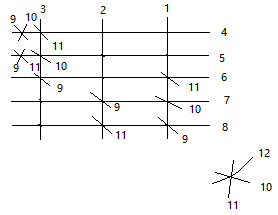}
\caption{}
\end{minipage}
\end{figure}

After an easy computation(by Lemma \ref{jisuan}), we conclude that Figures 7 and 8 cannot be realized
  \item[\rm III] $L_{10}$ passes through $L_1 \cap L_6$. $L_{11}$ passes through $\{L_1 \cap L_7, L_2 \cap L_8\}$(Figure 9).
\item[\rm IV] $L_{10}$ passes through $ L_2 \cap L_7$.  $L_{11}$ passes through $\{L_1 \cap L_7, L_2 \cap L_8\}$(Figure 10).
    \end{enumerate}
    \begin{figure}[h]
\begin{minipage}[t]{0.5\linewidth}
\centering
\includegraphics[width=6.5cm,height=4.5cm]{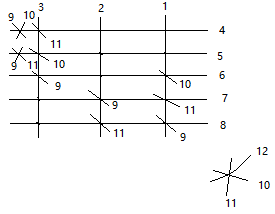}
\caption{}
\end{minipage}
\begin{minipage}[t]{0.5\linewidth}        %ֽ¼ֶ¬ױ¼׃ֳׂ»׀׀¿ם¶ָµִ45%
\hspace{2mm}
\includegraphics[width=6.5cm,height=4.5cm]{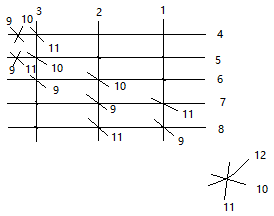}
\caption{}
\end{minipage}
\end{figure}

\item[\rm 2)] $L_{10}$ passes through $L_2\cap L_5$.
    \begin{enumerate}
    \item[\rm I]$L_{10}$ passes through $ L_3 \cap L_8$.  $L_{11}$ passes through $\{L_1 \cap L_7, L_2 \cap L_6\}$(Figure 11).

    After an easy computation(by Lemma \ref{jisuan}), we conclude that Figures 9, 10 and 11  cannot be realized.
\item[\rm II] $L_{11}$ passes through $\{L_1 \cap L_6, L_2 \cap L_8\}$. $L_{10}$ passes through $\{L_2\cap L_5, L_3 \cap L_7\}$ (Figure 12).
    \begin{figure}[h]
\begin{minipage}[t]{0.5\linewidth}
\centering
\includegraphics[width=6.5cm,height=4.5cm]{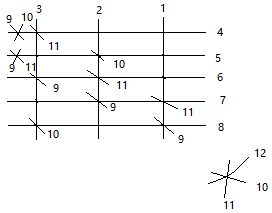}
\caption{}
\end{minipage}
\begin{minipage}[t]{0.5\linewidth}        %ֽ¼ֶ¬ױ¼׃ֳׂ»׀׀¿ם¶ָµִ45%
\hspace{2mm}
\includegraphics[width=6.5cm,height=4.5cm]{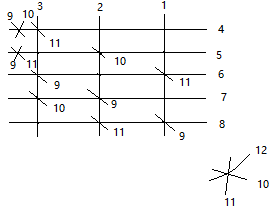}
\caption{}
\end{minipage}
\end{figure}

    Figure 12 can be defined by the following equation:
    $XYZ(Y-Z)(Y-t_1Z)(X-Z)(x-t_2Z)(x-t_3Z)(x-t_4Z)[X+(t_3-t_4)Y-t_3Z][X+(t_4-t_2)Y-t_4Z][X+(1-t_3)Y-Z]=0,$ where
    $t_1=\frac{1-t}{1-t-t^2}, t_2=t^2, t_3=t, t_4=1-t$, and satisfies $t^3+t^2+t-1=0$.

    \item[\rm III] $L_{10}$ passes through $L_1 \cap L_7$. $L_{11}$ passes through $\{L_1 \cap L_6, L_2 \cap L_8\}$(Figure 13).
\item[\rm IV] $L_{10}$ passes through $L_1 \cap L_6$. $L_{11}$ passes through $\{L_1 \cap L_7, L_2 \cap L_8\}$(Figure 14).
    \begin{figure}[h]
\begin{minipage}[t]{0.5\linewidth}
\centering
\includegraphics[width=6.5cm,height=4.5cm]{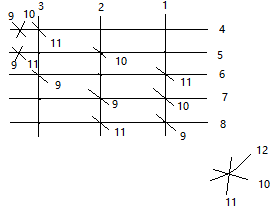}
\caption{}
\end{minipage}
\begin{minipage}[t]{0.5\linewidth}        %ֽ¼ֶ¬ױ¼׃ֳׂ»׀׀¿ם¶ָµִ45%
\hspace{2mm}
\includegraphics[width=6.5cm,height=4.5cm]{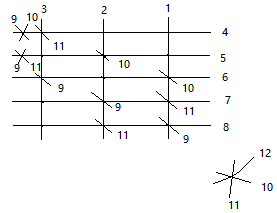}
\caption{}
\end{minipage}
\end{figure}

\end{enumerate}
After an easy computation(by Lemma \ref{jisuan}), we conclude that Figures 13 and 14 cannot be realized
\item[\rm 3)] $L_{10}$ passes through $L_1\cap L_5$. This case is lattice isomorphic to 2).
    \end{enumerate}

\item[Subcase 2]$L_9$ pass through 2 points of $\{L_i\cap L_j:1\leq i\leq3,4\leq j\leq8\}$. $L_{10}$ pass through 3 points of $\{L_i\cap L_j:1\leq i\leq3,4\leq j\leq8\}$. $L_{11}$ pass through 3 points of $\{L_i\cap L_j:1\leq i\leq3,4\leq j\leq8\}$. Up to a lattice isomorphism, we assume $L_{9}$ passes through $\{L_3 \cap L_6, L_2 \cap L_7\}$.

    \begin{enumerate}
    \item[\rm 1)]$L_{10}$ passes through $L_3\cap L_5$.
    \begin{enumerate}

    \item[\rm I] $L_{11}$ passes through $\{L_{3}\cap L_{4}, L_2 \cap L_6, L_1 \cap L_8\}$. $L_{10}$ passes through $\{L_3\cap L_5, L_1 \cap L_7, L_2 \cap L_8\}$ (Figure 15).
\item[\rm II] $L_{11}$ passes through $\{L_{3}\cap L_{4}, L_1 \cap L_7, L_2 \cap L_8\}$. $L_{10}$ passes through $\{L_3\cap L_5, L_2 \cap L_6, L_1 \cap L_8\}$ (Figure 16).
    \begin{figure}[h]
\begin{minipage}[t]{0.5\linewidth}
\centering
\includegraphics[width=6.5cm,height=4.5cm]{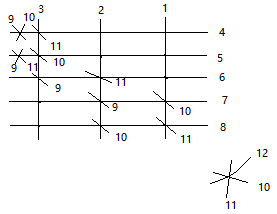}
\caption{}
\end{minipage}
\begin{minipage}[t]{0.5\linewidth}        %ֽ¼ֶ¬ױ¼׃ֳׂ»׀׀¿ם¶ָµִ45%
\hspace{2mm}
\includegraphics[width=6.5cm,height=4.5cm]{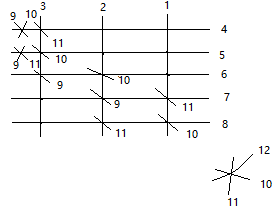}
\caption{}
\end{minipage}
\end{figure}

 After an easy computation, we conclude that Figures 15 and 16 cannot be realized.
\item[\rm III] $L_{11}$ passes through $\{L_{3}\cap L_{4}, L_2 \cap L_6, L_1 \cap L_8\}$. $L_{10}$ passes through $\{L_2\cap L_5, L_1 \cap L_7, L_3 \cap L_8\}$ (Figure 17).
\item[\rm IV] $L_{11}$ passes through $\{L_{3}\cap L_{4}, L_1 \cap L_6, L_2 \cap L_8\}$. $L_{10}$ passes through $\{L_2\cap L_5, L_1 \cap L_8, L_3 \cap L_7\}$ (Figure 18).
    \end{enumerate}
    \begin{figure}[h]
\begin{minipage}[t]{0.5\linewidth}
\centering
\includegraphics[width=6.5cm,height=4.5cm]{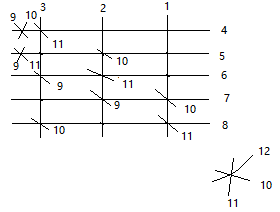}
\caption{}
\end{minipage}
\begin{minipage}[t]{0.5\linewidth}        %ֽ¼ֶ¬ױ¼׃ֳׂ»׀׀¿ם¶ָµִ45%
\hspace{2mm}
\includegraphics[width=6.5cm,height=4.5cm]{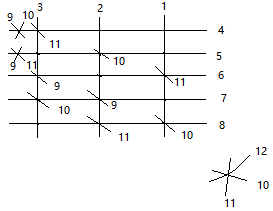}
\caption{}
\end{minipage}
\end{figure}

 After an easy computation, we conclude that Figures 17 and 18 cannot be realized.
\item[\rm 2)]$L_{10}$ passes through $L_2\cap L_5$.
\begin{enumerate}
\item[\rm I] $L_{11}$ passes through $\{L_{3}\cap L_{4}, L_1 \cap L_6, L_2 \cap L_8\}$. $L_{10}$ passes through $\{L_2\cap L_5, L_1 \cap L_7, L_3 \cap L_8\}$ (Figure 19).
\item[\rm II] $L_{11}$ passes through $\{L_{3}\cap L_{4}, L_1 \cap L_7, L_2 \cap L_8\}$. $L_{10}$ passes through $\{L_2\cap L_5, L_1 \cap L_6, L_3 \cap L_8\}$ (Figure 20).
    \begin{figure}[h]
\begin{minipage}[t]{0.5\linewidth}
\centering
\includegraphics[width=6.5cm,height=4.5cm]{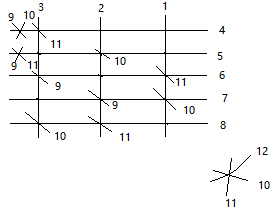}
\caption{}
\end{minipage}
\begin{minipage}[t]{0.5\linewidth}        %ֽ¼ֶ¬ױ¼׃ֳׂ»׀׀¿ם¶ָµִ45%
\hspace{2mm}
\includegraphics[width=6.5cm,height=4.5cm]{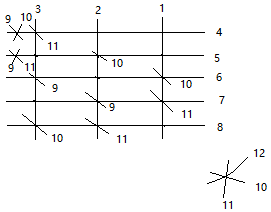}
\caption{}
\end{minipage}
\end{figure}

 After an easy computation, we conclude that Figures 19 and 20 cannot be realized.
\item[\rm III] $L_{11}$ passes through $\{L_{3}\cap L_{4}, L_2 \cap L_6, L_1 \cap L_8\}$. $L_{10}$ passes through $\{L_1\cap L_5, L_3 \cap L_7, L_2 \cap L_8\}$ (Figure 21).
\item[\rm IV] $L_{11}$ passes through $\{L_{3}\cap L_{4}, L_1 \cap L_7, L_2 \cap L_8\}$. $L_{10}$ passes through $\{L_1\cap L_5, L_2 \cap L_6, L_3 \cap L_8\}$ (Figure 22).
    \end{enumerate}
    \begin{figure}[h]
\begin{minipage}[t]{0.5\linewidth}
\centering
\includegraphics[width=6.5cm,height=4.5cm]{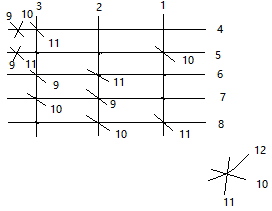}
\caption{}
\end{minipage}
\begin{minipage}[t]{0.5\linewidth}        %ֽ¼ֶ¬ױ¼׃ֳׂ»׀׀¿ם¶ָµִ45%
\hspace{2mm}
\includegraphics[width=6.5cm,height=4.5cm]{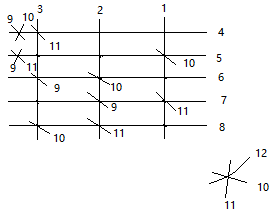}
\caption{}
\end{minipage}
\end{figure}

 After an easy computation, we conclude that Figures 21 and 22 cannot be realized.
 \item[\rm 3] $L_{10}$ passes through $L_1\cap L_5$.
 \begin{enumerate}
\item[\rm I] $L_{11}$ passes through $\{L_{1}\cap L_{4}, L_2 \cap L_6, L_3 \cap L_8\}$. $L_{10}$ passes through $\{L_1\cap L_5, L_3 \cap L_7, L_2 \cap L_8\}$ (Figure 23).
\item[\rm II] $L_{11}$ passes through $\{L_{1}\cap L_{4}, L_3 \cap L_7, L_2 \cap L_8\}$. $L_{10}$ passes through $\{L_1\cap L_5, L_2 \cap L_6, L_3 \cap L_8\}$ (Figure 24).
    \end{enumerate}
    \begin{figure}[h]
\begin{minipage}[t]{0.5\linewidth}
\centering
\includegraphics[width=6.5cm,height=4.5cm]{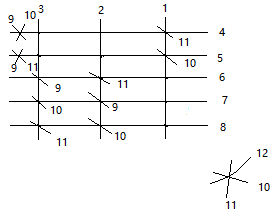}
\caption{}
\end{minipage}
\begin{minipage}[t]{0.5\linewidth}        %ֽ¼ֶ¬ױ¼׃ֳׂ»׀׀¿ם¶ָµִ45%
\hspace{2mm}
\includegraphics[width=6.5cm,height=4.5cm]{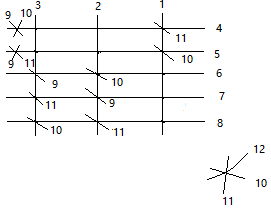}
\caption{}
\end{minipage}
\end{figure}

 After an easy computation, we conclude that Figures 23 and 24 cannot be realized.
    \end{enumerate}

\item[Subcase 3]$L_9$ pass through 3 points of $\{L_i\cap L_j:1\leq i\leq3,4\leq j\leq8\}$. $L_{10}$ pass through 3 points of $\{L_i\cap L_j:1\leq i\leq3,4\leq j\leq8\}$. $L_{11}$ pass through 3 points of $\{L_i\cap L_j:1\leq i\leq3,4\leq j\leq8\}$. Up to a lattice isomorphism, we assume $L_{9}$ passes through $\{L_3 \cap L_6, L_2 \cap L_7, L_1 \cap L_8\}$, $L_{3}\cap L_{4}$ is on $L_{11}$.
    \begin{enumerate}
    \item[\rm 1)] $L_{11}$ passes through $\{L_1 \cap L_7, L_2 \cap L_6\}$. $L_{10}$ passes through $\{L_3\cap L_5, L_1 \cap L_6, L_2 \cap L_8\}$ (Figure 25).
\item[\rm 2)] $L_{11}$ passes through $\{L_1 \cap L_6, L_2 \cap L_8\}$. $L_{10}$ passes through $\{L_2\cap L_5, L_1 \cap L_7, L_3 \cap L_8\}$ (Figure 26).
    \begin{figure}[h]
\begin{minipage}[t]{0.5\linewidth}
\centering
\includegraphics[width=6.5cm,height=4.5cm]{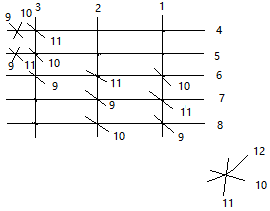}
\caption{}
\end{minipage}
\begin{minipage}[t]{0.5\linewidth}        %ֽ¼ֶ¬ױ¼׃ֳׂ»׀׀¿ם¶ָµִ45%
\hspace{2mm}
\includegraphics[width=6.5cm,height=4.5cm]{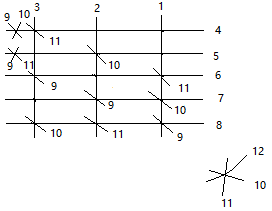}
\caption{}
\end{minipage}
\end{figure}

 After an easy computation, we conclude that Figures 25 and 26 cannot be realized.
\item[\rm 3)] $L_{11}$ passes through $\{L_1 \cap L_7, L_2 \cap L_6\}$. $L_{10}$ passes through $\{L_2\cap L_5, L_1 \cap L_6, L_3 \cap L_8\}$ (Figure 27).
\item[\rm 4)] $L_{11}$ passes through $\{L_1 \cap L_7, L_2 \cap L_8\}$. $L_{10}$ passes through $\{L_2\cap L_5, L_1 \cap L_6, L_3 \cap L_7\}$ (Figure 28).
    \begin{figure}[h]
\begin{minipage}[t]{0.5\linewidth}
\centering
\includegraphics[width=6.5cm,height=4.5cm]{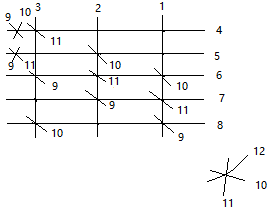}
\caption{}
\end{minipage}
\begin{minipage}[t]{0.5\linewidth}        %ֽ¼ֶ¬ױ¼׃ֳׂ»׀׀¿ם¶ָµִ45%
\hspace{2mm}
\includegraphics[width=6.5cm,height=4.5cm]{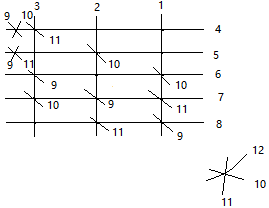}
\caption{}
\end{minipage}
\end{figure}

 After an easy computation, we conclude that Figures 27 and 28 cannot be realized.
\item[\rm 5)] $L_{11}$ passes through $\{L_1 \cap L_7, L_2 \cap L_8\}$. $L_{10}$ passes through $\{L_2\cap L_5, L_1 \cap L_6, L_3 \cap L_8\}$ (Figure 29).
\begin{figure}[h]
\centering
\includegraphics[width=6.5cm,height=4.5cm]{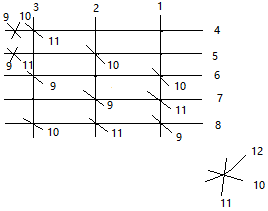}
\caption{}
\end{figure}
 After an easy computation, we conclude that Figures 29 cannot be realized.

    \end{enumerate}

\item[Subcase 4]$L_9$ pass through 3 points of $\{L_i\cap L_j:1\leq i\leq3,4\leq j\leq8\}$. $L_{10}$ pass through 3 points of $\{L_i\cap L_j:1\leq i\leq3,4\leq j\leq8\}$. $L_{11}$ pass through 2 points of $\{L_i\cap L_j:1\leq i\leq3,4\leq j\leq8\}$. Up to a lattice isomorphism, we assume $L_{9}$ passes through $\{L_3 \cap L_6, L_2 \cap L_7\}$, $L_{3}\cap L_{4}$ is on $L_{11}$. This subcase is lattice isomorphic to subcase 1.
\end{itemize}
\end{itemize}

\end{proof}
\section{Arrangements of $12$ lines with a sextic point and no quadruple point}

\subsection{One disjoint triple point apart from the pencil of the sextic point}
\begin{Theorem}
Let $\mathcal{A}$ be a nonreductive arrangement of $12$ lines in $\mathbb{CP}^2$ with a sexcit point $L_4 \cap L_5 \cap L_6 \cap L_7 \cap L_8 \cap L_{12}$ . Assume that $L_1 \cap L_2 \cap L_3 $ is the triple point apart from the sextic point; then there are exactly
$13$ triple points in $\mathcal{A}$. Then there is one case that can be realized, and this is ¡°potential Zariski pairs¡±.
\end{Theorem}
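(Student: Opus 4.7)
The plan is to determine the count $n_3 = 13$ by rigid combinatorial accounting, enumerate the resulting combinatorial types modulo the obvious symmetries, and then test each type algebraically via Lemma \ref{jisuan}.

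\textbf{Step 1 (counting).} By Theorem \ref{4} and the standing hypotheses (no quadruple point and no multiplicity $\geq 7$), we have $n_6 = 1$ and $n_r = 0$ for $r \in \{4, 5\}$ and $r \geq 7$, so Lemma \ref{2} gives $n_2 + 3 n_3 = 51$. I classify each triple point by the number of \emph{sextic lines} (members of the pencil $L_4, \ldots, L_8, L_{12}$) it contains. Any two sextic lines meet only at the sextic point, so any triple point other than that one contains at most one sextic line; call such a triple \emph{Type A} if it contains exactly one and \emph{Type B} if it contains none, and write $a, b$ for their respective counts. Nonreductivity forces each of the six sextic lines to carry at least two further multiple points, necessarily triples, yielding $a \geq 2 \cdot 6 = 12$. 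Conversely every Type A triple selects a pair of distinct non-sextic lines not both contained in $\{L_1, L_2, L_3\}$, and there are only $\binom{6}{2} - 3 = 12$ such pairs, so $a \leq 12$. Hence $a = 12$; since $L_1 \cap L_2 \cap L_3$ is by hypothesis the unique Type B triple, $n_3 = a + b = 13$.

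\textbf{Step 2 (combinatorial types).} The equality $a = 12$ is rigid: the twelve non-sextic pairs outside $\{L_1, L_2, L_3\}$ each meet on some sextic line, and each sextic line carries exactly two Type A triples. I encode those pairs as the edges of the graph $K_6 \setminus \triangle$ on $\{L_1, L_2, L_3, L_9, L_{10}, L_{11}\}$ (removing the triangle on $\{L_1, L_2, L_3\}$) and partition its twelve edges into six matchings of size two, one per sextic line. Two edges sharing a vertex cannot be assigned to the same sextic line, for otherwise that sextic line would have to pass through $L_1 \cap L_2 \cap L_3$, contradicting $b = 1$. Modding out by the $S_3$-action on $\{L_1, L_2, L_3\}$, the $S_3$-action on $\{L_9, L_{10}, L_{11}\}$, and the $S_6$ permuting sextic lines leaves a short list of candidate lattices to check.

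\textbf{Step 3 (realizability).} For each candidate I apply Lemma \ref{jisuan} to the 5-pencil $\{L_4, \ldots, L_8\}$ and the 3-pencil $\{L_1, L_2, L_3\}$, normalizing to $Y = 0,\ Y = Z,\ Y = t_1 Z,\ X = 0,\ X = Z,\ X = t_2 Z,\ X = t_3 Z,\ X = t_4 Z$. The sixth sextic line $L_{12}$ is then a further line through the 5-pencil center, contributing one more parameter $t_5$; the lines $L_9, L_{10}, L_{11}$ are each determined by two of their prescribed Type A triple points, and the remaining incidences impose closing polynomial equations in $t_1, \ldots, t_5$. The main obstacle is the routine but lengthy elimination showing that every candidate except one produces an inconsistent system, organized exactly as in Cases 1 and 2 of Section 4. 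In the surviving case I expect the closing conditions to collapse to a polynomial whose roots lie outside $\mathbb{R}$, so that $\mathcal{M}_\mathcal{A}$ splits into complex-conjugate components and yields the promised potential Zariski pair.
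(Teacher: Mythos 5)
Your Step 1 is correct and is in fact a cleaner route to the count than the paper's own argument: the paper bounds the number of ``mixed'' triple points (one line from $\{L_1,L_2,L_3\}$, one sextic line, one of $L_9,L_{10},L_{11}$) by $9$ and the remaining ones on the pencil by $3$, concluding there are $9+3$ triples on the sextic lines plus $L_1\cap L_2\cap L_3$; your pigeonhole $12\le a\le\binom{6}{2}-3$ gives the same rigidity at once. Your Step 2 reformulation (partitioning the $12$ edges of $K_6$ minus the triangle on $\{L_1,L_2,L_3\}$ into six $2$-matchings, one per sextic line) is equivalent to the paper's case division into Cases 1--5 and Figures 30--41, although your stated reason that two edges sharing a vertex cannot be assigned to the same sextic line (``the sextic line would pass through $L_1\cap L_2\cap L_3$'') is not the right one: the correct reason is that the two triple points would be distinct points lying on both the shared non-sextic line and the sextic line, forcing those two lines to coincide (or the point to have multiplicity $\ge 4$).

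There are, however, two genuine gaps. First, Steps 2--3 remain a plan: you never produce the finite list of candidate lattices, never carry out the eliminations, and never exhibit the defining equation of the surviving arrangement, yet this is precisely the content of the theorem; the paper does the enumeration explicitly (Cases 1--5, Figures 30--41) and shows by computation via the normalization of Lemma \ref{jisuan} that only Figure 41 is realizable, with defining equation over the field given by $t^3-4t^2+3t+1=0$. Second, and more seriously, your concluding sentence is conceptually backwards. That cubic has three \emph{real} roots, so $\mathcal{M}_\mathcal{A}$ consists of three real points which remain distinct in the quotient $\mathcal{M}^c_\mathcal{A}$; it is exactly this failure of the components to be exchanged by complex conjugation that produces the ``potential Zariski pair.'' If, as you predict, the closing polynomial had non-real roots and the components of $\mathcal{M}_\mathcal{A}$ were complex conjugate, then by Randell's lattice-isotopy theorem and the Cohen--Suciu result recalled in the introduction the arrangements could \emph{not} form a Zariski pair -- conjugate components are precisely what the passage to $\mathcal{M}^c_\mathcal{A}$ is designed to discard. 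So Step 3, as written, would establish the opposite of what is claimed and must be corrected both in its expectation about the roots and in its criterion for a potential Zariski pair.
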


\begin{proof}
 Because $\{L_{9}\cup L{10}\cup L_{11}\}\cap \{L_i\cap L_j:4\leq i<j\leq8\}$ are at most $9$ points. Then  at least 3 multiple  points are not on
$\{L_i\cap L_j:1\leq i\leq3, 4\leq j\leq8\}$. But at most 3 multiple  points are not on
$\{L_i\cap L_j:1\leq i\leq3, 4\leq j\leq8\}$. Then there are exactly
$12$ triple points in $\mathcal{A}$.Up to a lattice isomorphism, we can assume the 3 multiple  points are $\{L_4\cap L_{10}\cap L_{11}, L_5\cap L_{9}\cap L_{11}, L_6\cap L_{10}\cap L_{9}\}$. And $L_{9}$ passes through $\{L_2 \cap L_7, L_3 \cap L_8\}$.
\begin{itemize}
\item[Case 1] $L_9$ pass through $L_1\cap L_4$. $L_{10}$ pass through $L_1\cap L_5$. $L_{11}$ pass through $L_1\cap L_6$.
    \begin{enumerate}
    \item[\rm 1)] $L_{11}$ passes through $\{L_2 \cap L_8, L_3 \cap L_{12}\}$. $L_{10}$ passes through $\{L_2\cap L_{12}, L_3 \cap L_{7}\}$ (Figure 30).
\item[\rm 2)] $L_{11}$ passes through $\{L_2\cap L_{12}, L_3 \cap L_{7}\}$. $L_{10}$ passes through $\{L_2 \cap L_8, L_3 \cap L_{12}\}$ (Figure 31).
    \begin{figure}[h]
\begin{minipage}[t]{0.5\linewidth}
\centering
\includegraphics[width=6.5cm,height=4.5cm]{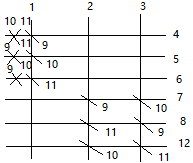}
\caption{}
\end{minipage}
\begin{minipage}[t]{0.5\linewidth}        %ֽ¼ֶ¬ױ¼׃ֳׂ»׀׀¿ם¶ָµִ45%
\hspace{2mm}
\includegraphics[width=6.5cm,height=4.5cm]{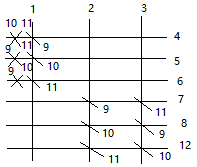}
\caption{}
\end{minipage}
\end{figure}
    \end{enumerate}

    After an easy computation, we conclude that Figures 30 and 31 cannot be realized.
    \item[Case 2] $L_9$ pass through $L_1\cap L_4$. $L_{10}$ pass through $L_1\cap L_5$. $L_{11}$ pass through $L_2\cap L_6$.
    \begin{enumerate}
    \item[\rm 1)] $L_{11}$ passes through $\{L_1 \cap L_8, L_3 \cap L_{12}\}$. $L_{10}$ passes through $\{L_2\cap L_{12}, L_3 \cap L_{7}\}$ (Figure 32).
\item[\rm 2)] $L_{11}$ passes through $\{L_1\cap L_{12}, L_3 \cap L_{7}\}$. $L_{10}$ passes through $\{L_2 \cap L_{8}, L_3 \cap L_{12}\}$ (Figure 33).
    \begin{figure}[h]
\begin{minipage}[t]{0.5\linewidth}
\centering
\includegraphics[width=6.5cm,height=4.5cm]{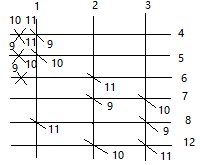}
\caption{}
\end{minipage}
\begin{minipage}[t]{0.5\linewidth}        %ֽ¼ֶ¬ױ¼׃ֳׂ»׀׀¿ם¶ָµִ45%
\hspace{2mm}
\includegraphics[width=6.5cm,height=4.5cm]{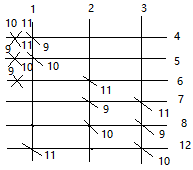}
\caption{}
\end{minipage}
\end{figure}
    \end{enumerate}

    After an easy computation, we conclude that Figures 32 and 33 cannot be realized.
  \item[Case 3] $L_9$ pass through $L_1\cap L_4$. $L_{10}$ pass through $L_2\cap L_5$. $L_{11}$ pass through $L_1\cap L_6$.
    \begin{enumerate}
    \item[\rm 1)] $L_{11}$ passes through $\{L_2 \cap L_8, L_3 \cap L_{12}\}$. $L_{10}$ passes through $\{L_1\cap L_{12}, L_3 \cap L_{7}\}$ (Figure 34).
\item[\rm 2)] $L_{11}$ passes through $\{L_2\cap L_{12}, L_3 \cap L_{7}\}$. $L_{10}$ passes through $\{L_1 \cap L_{8}, L_3 \cap L_{12}\}$ (Figure 35).
    \begin{figure}[h]
\begin{minipage}[t]{0.5\linewidth}
\centering
\includegraphics[width=6.5cm,height=4.5cm]{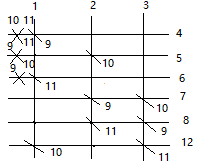}
\caption{}
\end{minipage}
\begin{minipage}[t]{0.5\linewidth}        %ֽ¼ֶ¬ױ¼׃ֳׂ»׀׀¿ם¶ָµִ45%
\hspace{2mm}
\includegraphics[width=6.5cm,height=4.5cm]{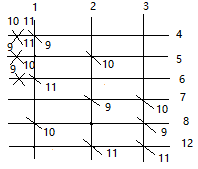}
\caption{}
\end{minipage}
\end{figure}
    \end{enumerate}

    After an easy computation, we conclude that Figures 34 and 35 cannot be realized.
    \item[Case 4] $L_9$ pass through $L_1\cap L_4$. $L_{10}$ pass through $L_2\cap L_5$. $L_{11}$ pass through $L_2\cap L_6$.
    \begin{enumerate}
    \item[\rm 1)] $L_{11}$ passes through $\{L_1 \cap L_8, L_3 \cap L_{12}\}$. $L_{10}$ passes through $\{L_1\cap L_{12}, L_3 \cap L_{7}\}$ (Figure 36).
\item[\rm 2)] $L_{11}$ passes through $\{L_1\cap L_{12}, L_3 \cap L_{7}\}$. $L_{10}$ passes through $\{L_1 \cap L_8, L_3 \cap L_{12}\}$ (Figure 37).
    \begin{figure}[h]
\begin{minipage}[t]{0.5\linewidth}
\centering
\includegraphics[width=6.5cm,height=4.5cm]{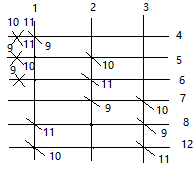}
\caption{}
\end{minipage}
\begin{minipage}[t]{0.5\linewidth}        %ֽ¼ֶ¬ױ¼׃ֳׂ»׀׀¿ם¶ָµִ45%
\hspace{2mm}
\includegraphics[width=6.5cm,height=4.5cm]{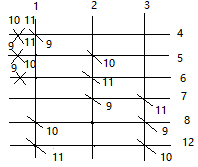}
\caption{}
\end{minipage}
\end{figure}
    \end{enumerate}

  After an easy computation, we conclude that Figures 36 and 37 cannot be realized.
   \item[Case 5] $L_9$ pass through $L_1\cap L_4$. $L_{10}$ pass through $L_2\cap L_5$. $L_{11}$ pass through $L_3\cap L_6$.
    \begin{enumerate}
    \item[\rm 1)] $L_{11}$ passes through $\{L_2 \cap L_8, L_1 \cap L_{12}\}$. $L_{10}$ passes through $\{L_1\cap L_{7}, L_3 \cap L_{12}\}$ (Figure 38).
\item[\rm 2)] $L_{11}$ passes through $\{L_1\cap L_{8}, L_2 \cap L_{12}\}$. $L_{10}$ passes through $\{L_1 \cap L_7, L_3 \cap L_{12}\}$ (Figure 39).
    \begin{figure}[h]
\begin{minipage}[t]{0.5\linewidth}
\centering
\includegraphics[width=6.5cm,height=4.5cm]{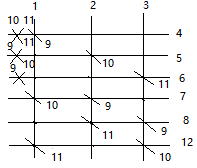}
\caption{}
\end{minipage}
\begin{minipage}[t]{0.5\linewidth}        %ֽ¼ֶ¬ױ¼׃ֳׂ»׀׀¿ם¶ָµִ45%
\hspace{2mm}
\includegraphics[width=6.5cm,height=4.5cm]{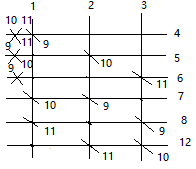}
\caption{}
\end{minipage}
\end{figure}

    \item[\rm 3)] $L_{11}$ passes through $\{L_1 \cap L_8, L_2 \cap L_{12}\}$. $L_{10}$ passes through $\{L_1\cap L_{12}, L_3 \cap L_{7}\}$ (Figure 40).
\item[\rm 4)] $L_{11}$ passes through $\{L_1\cap L_{7}, L_2 \cap L_{12}\}$. $L_{10}$ passes through $\{L_1 \cap L_8, L_3 \cap L_{12}\}$ (Figure 41).
    \begin{figure}[h]
\begin{minipage}[t]{0.5\linewidth}
\centering
\includegraphics[width=6.5cm,height=4.5cm]{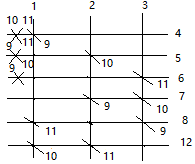}
\caption{}
\end{minipage}
\begin{minipage}[t]{0.5\linewidth}        %ֽ¼ֶ¬ױ¼׃ֳׂ»׀׀¿ם¶ָµִ45%
\hspace{2mm}
\includegraphics[width=6.5cm,height=4.5cm]{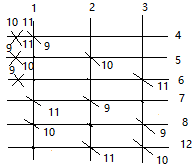}
\caption{}
\end{minipage}
\end{figure}
     \end{enumerate}

    After an easy computation, we conclude that Figures 38, 39 and 40 cannot be realized.
    Figure 41 can be defined by the following equation:
    $XY(Y-Z)(Y-t_1Z)(X-Z)(x-t_2Z)(x-t_3Z)(x-t_4Z)(X-t_5Z)(X+t_3Y-t_3Z)[X+(1-t_4)Y-Z][X+(t_5-t_3)Y-t_5Z]=0,$ where
    $t_1=1-\frac{1}{2t-t^2}, t_2=\frac{t-t^2+1}{2-t}, t_3=t, t_4=\frac{1}{2-t}, t_5=2t-t^2$, and satisfies $t^3-4t^2+3t+1=0$.
\end{itemize}
\end{proof}

\subsection{All triple points are in the pencil of the sextic point}
First, we show that there are at most 14 triple points in A.

\begin{Theorem}
 Let $\mathcal{A}$ be a nonreductive arrangement of $12$ lines with one sextic point so that all triple points
are on the lines passing through the sexcit point. Then there are at most 14 triple points and at least 12
triple points.

\end{Theorem}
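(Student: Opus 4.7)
Combining Theorem~\ref{4} applied with the ``no quadruple point'' hypothesis, together with the Section~3 result excluding multiplicities $\ge 7$, the only non-zero entries of the multiplicity vector are $n_2$, $n_3$ and $n_6=1$. Lemma~\ref{2} then reduces to $n_2+3n_3=51$, so bounding $n_3$ is equivalent to bounding $n_2$. For the lower bound, let $L_1,\dots,L_6$ be the six lines through the sextic point $p$. Nonreductivity forces each $L_i$ to carry at least three multiple points; one of them is $p$ and the rest must be triples because $n_4=n_5=0$. Two distinct $L_i$'s intersect only at $p$, so any given triple lies on exactly one $L_i$, whence $n_3\ge 2\cdot 6=12$.

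For the upper bound I let $M_1,\dots,M_6$ be the remaining six lines, none through $p$. By hypothesis every triple meets the pencil at $p$, and $n_4=0$ prevents three $M$-lines from being concurrent, so every triple has the form $L_i\cap M_j\cap M_k$. Writing $t_j$ for the number of triples on $M_j$ and $d_j$ for its doubles, each triple absorbs two of the eleven intersections of $M_j$ with the remaining lines while each double absorbs one, giving $2t_j+d_j=11$ and hence $t_j\le 5$. Summing over $j$ and using that each triple lies on exactly two $M$-lines yields $\sum_j t_j = 2n_3$, so $n_3\le 15$.

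The delicate step is to exclude $n_3=15$. Equality would force $t_j=5$ for all $j$, so every pair $M_j\cap M_k$ is a triple lying on some $L_{i(j,k)}$, and the assignment $\{j,k\}\mapsto i(j,k)$ is a proper edge-colouring of $K_6$ by the six $L$-colours. Each $M_j$ then has a unique missing colour $u_j$; setting $s_i=|u^{-1}(i)|$ one computes $t_i^L=(6-s_i)/2$, so $s_i$ must be even and $\sum_i s_i=6$, while nonreductivity $t_i^L\ge 2$ on each $L_i$ forces $s_i\le 2$. The only admissible profile is $(t_1^L,\dots,t_6^L)=(3,3,3,2,2,2)$ up to permutation. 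I would then project from $p$ onto $\mathbb{P}^1$, parametrise the six $M$-lines by affine coefficients $(a_j,b_j)\in\mathbb{C}^2$, and translate the condition ``$M_j\cap M_k\in L_i$'' into the linear identity $a_j+\ell_i b_j=a_k+\ell_i b_k$; the over-determined system indexed by the three perfect matchings and three size-two matchings of the $(3,3,3,2,2,2)$ pattern should then be shown inconsistent, yielding $n_3\le 14$. This realisation step is the main obstacle: all the preceding reductions are routine incidence arithmetic, and the real content is showing that the rigid $(3,3,3,2,2,2)$ pattern cannot be accommodated by a single pencil in $\mathbb{P}^2$.
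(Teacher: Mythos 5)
Your reductions up to $n_3\le 15$ are correct and are in fact tidier than the paper's: the identity $2t_j+d_j=11$ kills six triples on an outer line immediately, whereas the paper first allows $t_j=6$ and then eliminates it by a separate argument, and your lower bound $n_3\ge 12$ is the same as the paper's. (One small wording point: ``$n_4=0$ prevents three $M$-lines from being concurrent'' needs the standing hypothesis as well — a concurrence of three $M$-lines off the pencil is only a triple point, excluded because all triples lie on the pencil; on the pencil it would be a quadruple point.) The genuine gap is exactly the step you flag yourself: the exclusion of $n_3=15$. You correctly force $t_j=5$ for every outer line, correctly obtain a proper $6$-edge-colouring of $K_6$ and the profile $(3,3,3,2,2,2)$, but then only promise that the resulting over-determined linear system ``should be shown inconsistent.'' That is a plan, not a proof — and, more seriously, the system is in fact consistent, so the plan cannot be carried out.

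Concretely, in your projection-from-$p$ coordinates the condition is that six points $(a_j,b_j)$ determine connecting lines with only six distinct slopes, the slope classes having sizes $3,3,3,2,2,2$. The vertices of a regular hexagon do exactly this: each long-diagonal direction is shared by one long diagonal and the two edges parallel to it (three classes of size $3$), and each short-diagonal direction is shared by two parallel short diagonals (three classes of size $2$). Dualizing — take as $M_1,\dots,M_6$ the lines dual to the six hexagon vertices, and as the pencil the six lines dual to the six direction points, all of which pass through the point $p$ dual to the line at infinity — yields a nonreductive arrangement of $12$ lines with exactly one sextic point, $n_4=n_5=0$, no points of multiplicity $\ge 7$, and fifteen triple points, every one of them on the pencil. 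So the $(3,3,3,2,2,2)$ case is realizable and no computation will show your system inconsistent. It is worth noting that the paper's own proof breaks at the very same place: for $n_3=15$ its list of solutions of $a+b+c+d=6$, $3a+4b+5c+6d=30$ omits $(a,b,c,d)=(0,0,6,0)$, which is the only solution compatible with $d=0$ and is precisely the pattern realized above; hence the stated upper bound of $14$ appears to be wrong, $15$ being attainable. Your identification of the delicate case is therefore correct and valuable, but the resolution has to go in the opposite direction (exhibiting the configuration, or adding hypotheses that exclude it), not toward an inconsistency proof.
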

\begin{proof}
From Lemma \ref{2} and Theorem \ref{3}, we have the following equation£÷
\begin{eqnarray*}
n_2 + 3n_3 +6n_4 + 10n_5 +15n_6 &=& 66\\
n_2 +\frac{3}{4}n_3 &\geq& 11 + n_5 +3n_6.
\end{eqnarray*}
From the above equations, we compute $n_3 \leq 16$.

 Let $L_1 \cap L_2 \cap L_3 \cap L_4 \cap L_5 \cap L_6$be the sextic point. each of the other
six lines passes through at least 3 and at most 6 triple points. Let a, b, c and d be the numbers of lines in
$\{L_7, L_8, L_9, L_10, L_{11}, L_{12}\}$ that pass through $3$, $4$, $5$, and $6$ triple points, respectively. If $d\neq0$, then one of $\{L_7, L_8, L_9, L_10, L_{11}, L_{12}\}$ pass through $6$ triple points. We can assume  $L_7$ pass through 6 triple points. But $L_7\cap (L_8\cup L_9\cup L_10\cup L_{11}\cup L_{12})\cap (L_1 \cup L_2 \cup L_3 \cup L_4 \cup L_5 \cup L_6)$ are at most $5$ points. Then $d=0$.

If $n_3 = 16$.
Then a, b, c, and d should satisfy
the following system of equations:

\begin{eqnarray*}
a+b+c+d &=& 6\\
3a+4b+5c+6d &=& 32.
\end{eqnarray*}
From the above equations, we have 4 solutions:
\begin{eqnarray*}
a&=&0, b = 0, c\ =\ 4, d\ =\ 2\\
a&=&0, b = 1, c\ =\ 2, d\ =\ 3\\
a&=&0, b = 2, c\ =\ 0, d\ =\ 2\\
a&=&1, b = 0, c\ =\ 1, d\ =\ 4
\end{eqnarray*}

 So $\mathcal{A}$ does not exist.

If $n_3 = 15$. Then a, b, c, and d should satisfy
the following system of equations:

\begin{eqnarray*}
a+b+c+d &=& 6\\
3a+4b+5c+6d &=& 30.
\end{eqnarray*}
From the above equations, we have $6$ solutions:
\begin{eqnarray*}
a&=&0, b\ =\ 1, c\ =\ 4, d\ =\ 1\\
a&=&0, b\ =\ 2, c\ =\ 2, d\ = 2\\
a&=&1, b\ =\ 0, c\ =\ 3, d\ =\ 2\\
a&=&0, b\ =\ 3, c\ =\ 0, d\ =\ 3\\
a&=&1, b\ =\ 1, c =\ 1, d\ =\ 3\\
a&=&2, b\ =\ 0, c\ =\ 0, d\ =\ 4
\end{eqnarray*}

 So $\mathcal{A}$ does not exist.

 Because each line of $\{L_1, L_2, L_3, L_4, L_5 L_6\}$ has at least two triple points, then $n_3 \geq 12$.

\end{proof}
\begin{Lemma}\label{cccc}
Let $\mathcal{A} = \{L_1, L_2,\cdots, L_n\}$ be a line arrangement. Assume that $L_n$ passes
through at least $2$ multiple points. Set $\mathcal{A}'= \{L_1, L_2, \cdots , L_{n-1}\}$, and then $\mathcal{M}_{\mathcal{A}}$ (or ${\mathcal{M}^c}_{\mathcal{A}}$ ) is a point or empty if $\mathcal{M}_{\mathcal{A}'}$ (or ${\mathcal{M}^c}_{\mathcal{A}'}$ ) is a point.
\end{Lemma}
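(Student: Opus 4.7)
The plan is to exploit the fact that a line in $\mathbb{CP}^2$ is uniquely determined by any two of its distinct points. Once we know that $\mathcal{A}'$ is rigid up to $PGL_\mathbb{C}(2)$, the rigidity of the full arrangement $\mathcal{A}$ follows as soon as we observe that the extra line $L_n$ is pinned down by the two multiple points it is required to contain.

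First, I would pick an arbitrary representative $\mathcal{B}=\{H_1,\dots,H_n\}\in \mathcal{M}_\mathcal{A}$ and let $\phi$ be the permutation realizing the lattice isomorphism $\mathcal{A}\to\mathcal{B}$. Deleting $H_{\phi(n)}$, the subarrangement $\mathcal{B}':=\mathcal{B}\setminus\{H_{\phi(n)}\}$ is lattice isomorphic to $\mathcal{A}'$ and hence represents the unique point of $\mathcal{M}_{\mathcal{A}'}$. Choose $g\in PGL_\mathbb{C}(2)$ with $g(\mathcal{B}')=\mathcal{A}'$ and replace $\mathcal{B}$ by $g(\mathcal{B})$; then we may assume $H_{\phi(i)}=L_i$ for every $i<n$.

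Next, let $p_1,p_2$ denote the two multiple points of $\mathcal{A}$ that $L_n$ passes through. Each $p_k$ has multiplicity at least $3$ in $\mathcal{A}$, so at least two lines of $\mathcal{A}'$ already meet at $p_k$ and pin it down as a point of the plane; in particular $p_1\neq p_2$. By the incidence condition in the definition of lattice isomorphism, the line $H_{\phi(n)}$ of $\mathcal{B}$ must contain the two points of $\mathcal{B}'$ corresponding to $p_1,p_2$, which after the identification $\mathcal{A}'=\mathcal{B}'$ are these very same points. Hence $H_{\phi(n)}$ is forced to be the unique line through $p_1$ and $p_2$, namely $L_n$ itself. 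If the resulting $\mathcal{B}=\mathcal{A}$ in fact satisfies every remaining incidence demanded by the lattice $L(\mathcal{A})$, we obtain a single orbit in $\mathcal{M}_\mathcal{A}$; if not, $\mathcal{M}_\mathcal{A}$ is empty.

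The assertion for the conjugation quotient $\mathcal{M}^c_\mathcal{A}$ is completely analogous: run the same argument but allow oneself to replace $\mathcal{B}$ by its complex conjugate before performing the $PGL_\mathbb{C}(2)$ adjustment, so that $\mathcal{B}'$ becomes projectively equivalent to $\mathcal{A}'$ in $\mathcal{M}^c_{\mathcal{A}'}$; then $L_n$ is forced exactly as before. There is no real obstacle in the proof — the argument is essentially the slogan ``two distinct points determine a line'' — and the only subtle point worth checking is that $p_1\neq p_2$, which is guaranteed by the hypothesis that $L_n$ passes through at least two multiple points.
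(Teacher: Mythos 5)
Your argument is correct and is essentially the paper's own proof: both reduce to the subarrangement $\mathcal{A}'$, use that $\mathcal{M}_{\mathcal{A}'}$ (resp. $\mathcal{M}^c_{\mathcal{A}'}$) is a single point to normalize it by a projective transformation (allowing conjugation in the quotient case), and then observe that the two multiple points on $L_n$, already determined by $\mathcal{A}'$, force the last line, since two distinct points determine a line. Your write-up is if anything slightly more careful than the paper's (explicit permutation $\phi$, the distinctness of $p_1,p_2$, and the "or empty" alternative), but the route is the same.
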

\begin{proof}
We assume that  $\mathcal{M}_{\mathcal{A}}$  has two points  $\mathcal{A}$ and $\tilde{\mathcal{A}}.$ Then
$\mathcal{A}\sim \tilde{\mathcal{A}}/PGL_\mathbb{C}(2)$.
Let $\tilde{\mathcal{A}}= \{\tilde{L_1}, \tilde{L_2},\cdots, \tilde{L_n}\}$ and $\tilde{\mathcal{A}}' = \{\tilde{L_1}, \tilde{L_2},\cdots, \tilde{L_{n-1}}\}$ . There is a mapping $\varphi: L_i\rightarrow \tilde{L_i}$ such that $\varphi$ is an isomorphism. Then $\mathcal{A}'\sim \tilde{\mathcal{A}'}/PGL_\mathbb{C}(2)$. Because $\mathcal{M}_{\mathcal{A}'}$ is a point, then $\mathcal{A}'= \tilde{\mathcal{A}'}/PGL_\mathbb{C}(2)$. Because $L_n$ passes through at least $2$ multiple points and two points define a line, then $\mathcal{A}= \tilde{\mathcal{A}}/PGL_\mathbb{C}(2)$. Therefore $\mathcal{M}_{\mathcal{A}}$ is a point or empty. As the same proof, we can get ${\mathcal{M}^c}_{\mathcal{A}}$ is a point or empty if  ${\mathcal{M}^c}_{\mathcal{A}'}$  is a point.
\end{proof}
\begin{Theorem}
Let $\mathcal{A}$ be a nonreductive arrangement of $12$ lines with one sextic point and 14 triple points
such that all triple points are on the $6$ lines passing through the sexcit point. Then the quotient moduli space $\mathcal{M}^c_\mathcal{A}$ is
irreducible.
\end{Theorem}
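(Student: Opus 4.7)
The plan is to reduce the arrangement to a ``core'' subarrangement whose moduli is zero-dimensional, propagate this to the full arrangement via Lemma \ref{cccc}, and then observe that complex conjugation collapses any conjugate pair of solutions to a single point of $\mathcal{M}^c_\mathcal{A}$.

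First, from the counting done in the previous theorem, when $n_3=14$ the partition $(m_7,\ldots,m_{12})$ of triple-point counts on the lines not passing through the sextic point must be either $(3,5,5,5,5,5)$ or $(4,4,5,5,5,5)$; in particular at least four of the outer lines carry exactly five triple points. I would normalize coordinates using $PGL_3(\mathbb{C})$ by placing the sextic point at $[0:0:1]$ and using the residual $PGL_2(\mathbb{C})$ action on the pencil through that point to fix, say, $L_1=\{Y=0\}$, $L_2=\{X=0\}$, $L_3=\{X=Y\}$; the remaining pencil lines $L_4,L_5,L_6$ are then parametrized by three scalars $t_1,t_2,t_3$. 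Since each triple point is forced to have the form $L_i\cap L_j\cap L_k$ with $i\leq 6<j<k$ (another pair from the pencil would give a point of multiplicity $\geq 7$ at the sextic point), its coordinates are rational in $t_1,t_2,t_3$ as soon as its combinatorial incidence data is fixed.

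Next, I would enumerate, up to the symmetric-group action on the pencil, the combinatorial types of incidence patterns (which outer lines pass through which triple points) compatible with one of the two partitions. For each surviving type, pick an outer line $L_j$ with $m_j=5$ and write the collinearity of its five prescribed triple points as a polynomial system in $t_1,t_2,t_3$. This system is defined over $\mathbb{Q}$ and so its solution set is invariant under complex conjugation; it is expected to be zero-dimensional. Because every outer line passes through at least three triple points, hence at least two multiple points, once such a core subarrangement has moduli equal to a single point Lemma \ref{cccc} forces the adjunction of each successive outer line to keep $\mathcal{M}_\mathcal{A}$ (and $\mathcal{M}^c_\mathcal{A}$) a single point or empty. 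Consequently $\mathcal{M}_\mathcal{A}$ is a finite set closed under complex conjugation, and the quotient $\mathcal{M}^c_\mathcal{A}$ is a single point, which is trivially irreducible.

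The main obstacle, as in the proof of Theorem \ref{4} and its successor, is the exhaustive combinatorial case analysis: listing all lattice-isomorphism classes realizing one of the two partitions, ruling out the many non-realizable ones, and verifying in each surviving case that the polynomial system in $(t_1,t_2,t_3)$ has only finitely many solutions with non-real ones appearing in conjugate pairs. The algebra in each individual case reduces to a routine cross-ratio computation via Lemma \ref{jisuan}.
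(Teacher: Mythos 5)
Your proposal does not actually establish the statement: the decisive steps are deferred rather than proven. You say the polynomial systems are ``expected to be zero-dimensional'' and list as ``the main obstacle'' precisely the exhaustive enumeration of incidence patterns and the verification that each realizable one yields a single point after the conjugation quotient --- but that verification \emph{is} the theorem. Moreover, the inference you draw at the end is false as stated: knowing that $\mathcal{M}_\mathcal{A}$ is a finite set defined over $\mathbb{Q}$, hence stable under complex conjugation, does not imply that $\mathcal{M}^c_\mathcal{A}$ is one point. A finite conjugation-stable set can consist of several real points or several conjugate pairs, and the paper's own Examples (Figures 42--44, with $13$ or $12$ triple points) exhibit exactly such moduli spaces with $2$ or $3$ points; with $14$ triple points something special must be shown, not inferred from rationality of the equations. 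There is also a parametrization slip: after fixing the pencil lines by $t_1,t_2,t_3$, the triple points are intersections of a pencil line with two of the \emph{outer} lines, so their coordinates are not rational functions of $t_1,t_2,t_3$ alone; the outer lines are additional unknowns, and your appeal to Lemma \ref{cccc} to add them one at a time needs, at each stage, that the two multiple points used to pin down the new line are already multiple points of the subarrangement --- a condition you never check (removing lines can demote triple points to double points).

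The paper's route is different and avoids the enumeration you propose. Using the count $a+b+c=6$, $3a+4b+5c=28$ (your two partitions), it notes $a\le 1$, so among the four pencil lines carrying only two triple points one can choose a line $L_6$ whose two triple points avoid the unique outer line with only three triple points; deleting $L_6$ then leaves a \emph{nonreductive} arrangement $\mathcal{A}'$ of $11$ lines with a quintuple point and $12$ triple points all in its pencil. By the earlier classification of $11$-line arrangements (\cite{Amr}, Theorem 6.6), $\mathcal{M}^c_{\mathcal{A}'}$ is a single point, and Lemma \ref{cccc} (the removed pencil line passes through the sextic point and two triple points) gives that $\mathcal{M}^c_\mathcal{A}$ is a point or empty. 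If you want to salvage your bottom-up approach, you would have to either import this $11$-line result as the ``core'' or genuinely carry out the case analysis and show, case by case, that the solutions form at most one conjugation orbit.
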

\begin{proof}
Let $L_1 \cap L_2 \cap L_3 \cap L_4 \cap L_5 \cap L_6$be the sextic point. Since there are $14$ triple points on those $6$ lines and
we know that each of $6$ lines passes through at least $2$ and at most $3$ triple points, then we may assume that each
of $L_1$ and $L_2$  passes through 3 triple points. On the other hand, each of the other six lines passes through
at least $3$ and at most $5$ triple points. Let $a$, $b$, and $c$ be the numbers of lines in ${L_7, L_8, L_9, L_{10}, L_{11}, L_{12}}$
that pass through $3$, $4$, and $5$ triple points, respectively. Then a and b should satisfy the following system of
equations:
\begin{eqnarray*}
a + b + c &=& 6\\
3a + 4b + 5c &=& 28.
\end{eqnarray*}
From the above equations, we have two solutions:
\begin{eqnarray*}
a&=&1,\ b\ =\ 0,\ c\ =\ 5\\
a&=&0,\ b\ =\ 2,\ c\ =\ 4.
\end{eqnarray*}

Because there are $14$ triple points, one of ${L_1, L_2, L_3, L_4, L_5, L_6}$ has only two triple points. Sice $a\leq1$, we can let $L_6$ has only two triple points . Such that $\mathcal{A}'= \mathcal{A} \setminus L_6 $ is not reductive. Now $\mathcal{A}'$ is an arrangement of $11$ lines with $1$  quintuple point. All triple points are in the pencil of the quintuple point, and $\mathcal{A}'$ has $12$ triple points. From [\cite{Amr}, Theorem
6.6], $\mathcal{M}^c_{\mathcal{A}'}$
is one point.  Then the moduli space $\mathcal{M}^c_\mathcal{A}$ is one point or empty (by lemma \ref{cccc}).

\end{proof}
From the above discussions, we have the following corollary:

\begin{Corollary}
Let $\mathcal{A}$ be a nonreductive line arrangement of $12$ lines with $n_6 = 1$, $n_4 = 0$, and $n_r = 0$, $r \geq 7$.
Moreover, all triple points are on the $6$ lines passing through the sextic point. If it contains more than $13$
triple points, then there is no Zariski pair.
\end{Corollary}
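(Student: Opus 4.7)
The plan is to reduce immediately to the single remaining case and then invoke the result already established one theorem earlier. By the upper-bound theorem proved above, an arrangement of the type described can have at most $14$ triple points (since all triple points must lie on the $6$ lines through the sextic point, and the case analysis via $a,b,c,d$ ruled out $n_3=15$ and $n_3=16$). The hypothesis ``more than $13$'' therefore forces $n_3=14$ exactly, so I only need to handle the $14$-triple-point combinatorics.

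For that combinatorial type, the preceding theorem shows that $\mathcal{M}^c_\mathcal{A}$ is either a single point or empty; in particular it has at most one connected component. Lifting back to $\mathcal{M}_\mathcal{A}$, this means $\mathcal{M}_\mathcal{A}$ is either empty, a single point, or a pair of points exchanged by complex conjugation.

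Now I would invoke the general principle recorded in the introduction: by Randell's lattice-isotopy theorem together with the Cohen--Suciu theorem, two arrangements lying in the same connected component of $\mathcal{M}_\mathcal{A}$, or in two complex-conjugate components, have diffeomorphic complements and hence isomorphic fundamental groups, so they cannot constitute a Zariski pair. In each of the three possibilities above, any two realizations of the fixed lattice must lie in the same component of $\mathcal{M}^c_\mathcal{A}$, which is exactly the situation excluded by the Randell--Cohen--Suciu criterion. Hence no Zariski pair of this type exists, which is the assertion of the corollary.

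The argument is essentially bookkeeping: the real content was done in the two theorems just above (bounding $n_3$ and showing $\mathcal{M}^c_\mathcal{A}$ is a point for $n_3=14$). The only minor subtlety to check is that ``more than $13$'' is correctly interpreted as $n_3=14$ under the standing hypotheses $n_6=1$, $n_4=0$, $n_r=0$ for $r\geq 7$, and the nonreductivity assumption; this is immediate from the upper bound $n_3\leq 14$ proved above. No additional computation or case analysis is required.
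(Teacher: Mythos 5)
Your argument is correct and is essentially the paper's own (implicit) proof: the paper derives this corollary directly "from the above discussions," namely the bound forcing $n_3=14$, the preceding theorem that $\mathcal{M}^c_\mathcal{A}$ is a point or empty in that case, and the Randell--Cohen--Suciu principle that arrangements in the same component or in complex-conjugate components cannot form a Zariski pair. Nothing further is needed.
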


Let $\mathcal{A}$ be a nonreductive arrangement of $12$ lines with a sextic point and all triple points are on the
$6$ lines passing through the sextic point. If the number of the triple points is less than $14$, then there are
many cases in which $\mathcal{M}_\mathcal{A}$ is more than $1$ point. Now we give 3 examples.
\begin{Example}
 The line arrangements are with $13$ triple points, and all triple points are on the $6$ lines passing
through the sextic point (see Figure 42).
\begin{figure}[h]
\centering
\includegraphics[width=6.5cm,height=4.5cm]{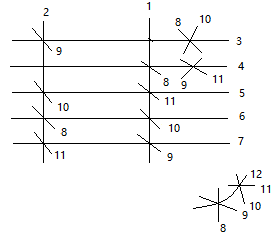}
\caption{}
\end{figure}
After some easy computation, we get the equation as follows: $XYZ(X-Z)(Y-Z)(X -t_1Z)(X-t_2Z)(X-t_3Z)[X+(t_2-1)Y-t_2Z][X+(t_2-t_1)Y-t_2Z](X-t_3Y)[X+(t_1-t_2)Y-t_1Z]= 0$, where $t_1=\frac{1}{2}, t_2=\pm\sqrt{\frac{1}{2}}, t_3=1\mp\sqrt{\frac{1}{2}}$.
 so that the moduli space $\mathcal{M}_\mathcal{A}$ is
two points.

\end{Example}
\begin{Example}
 The line arrangements are with $12$ triple points, and all triple points are on the $6$ lines passing
through the sextic point (see Figure 43).
\begin{figure}[h]
\centering
\includegraphics[width=6.5cm,height=4.5cm]{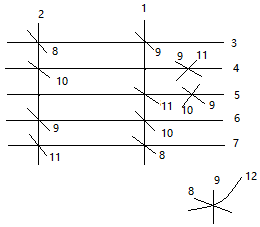}
\caption{}
\end{figure}
After some easy computation, we get the equation as follows: $XYZ(X-Z)(Y-Z)(X -t_1Z)(X-t_2Z)(X-t_3Z)(X+t_2Y-t_2Z)[X+(1-t_2)Y-Z](X-t_3Y)[X+(t_3-t_1)Y-t_3Z]= 0$, where $t_1=t^2-4t, t_2=t, t_3=-t,$ and $t$ satisfies $t^2-5t+2=0$.
 so that the moduli space $\mathcal{M}_\mathcal{A}$ is
two points.

\end{Example}
\begin{Example}
 The line arrangements are with $12$ triple points, and all triple points are on the $6$ lines passing
through the sextic point (see Figure 44).
\begin{figure}[h]
\centering
\includegraphics[width=6.5cm,height=4.5cm]{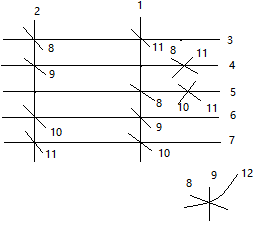}
\caption{}
\end{figure}
After some easy computation, we get the equation as follows: $XYZ(X-Z)(Y-Z)(X -t_1Z)(X-t_2Z)(X-t_3Z)[X+(1-t_2)Y-Z][X+(t_2-t_3)Y-t_2Z](X-t_1Y)[X+t_3Y-t_3Z]= 0$, where $t_1=t-1, t_2=t, t_3=\frac{t-1}{t-2},$ and $t$ satisfies $t^3-3t^2+2t-2=0$.
 so that the moduli space $\mathcal{M}_\mathcal{A}$ is
3 points.

\end{Example}


\begin{thebibliography}{99} % Bibliography - this is intentionally simple in this template
\bibitem{Amr}
 M. Amram , G. Cheng , M. Teicher , W.-Y. Xu.
\textit{Moduli spaces of arrangements of 11 projective lines with a quintuple point},
Turk J Math  \textbf{39} (2015),  618-644.


\bibitem{Amr1}
 M. Amram ,M. Cohen, M. Teicher, F. Ye.
\textit{Moduli spaces of ten-line arrangements with double and triple points},
arXiv:1306.6105v1.

\bibitem{Amr2}
 M. Amram, M. Teicher, F. Ye,
\textit{Moduli spaces of arrangements of 10 projective lines with quadruple points},
Adv Appl
Math  \textbf{3} (2013),  392-418.



\bibitem{Coh}
 D. C. Cohen, A. I. Suciu.
\textit{The braid monodromy of plane algebraic curves and hyperplane arrangements},
Comment
Math Helv  \textbf{72} (1997),  285-315.





\bibitem{Fan}
 K. Fan,
\textit{Direct product of free groups as the fundamental group of the complement of a union of lines},
Michigan
Math J  \textbf{44} (1997),  283-291.



\bibitem{Gar}

D. Garber,M. Teicher,U. Vishne,
\textit{$\pi_1-$classification of real arrangements with up to eight lines},
Topology \textbf{42} (2003),  265-289.




\bibitem{Hir}
F. Hirzebruch,\textit{Singularities of algebraic surfaces and characteristic numbers},
In: The Lefschetz Centennial Conference, Part I, Mexico City, 1984. in: Contemp. Math. vol. 58, Amer. Math. Soc., Providence, RI, 1986, pp. 141¨C155.

\bibitem{Jia}

T. Jiang,S. S. T. Yau,
\textit{Diffeomorphic types of the complements of arrangements of hyperplanes},
Compos Math \textbf{92} (1994),  133-155.

\bibitem{Jia2}
T. Jiang,S. S. T. Yau,
\textit{Intersection lattices and topological structures of complements of arrangements in $\mathbb{CP}^2$},
Ann Sc Norm Super Pisa Cl Sci \textbf{26} (1998),  357-381.

\bibitem{Naz}
S. Nazir,M. Yoshinaga,
\textit{On the connectivity of the realization spaces of line arrangements},
Ann Sc Norm Super Pisa Cl Sci \textbf{11} (2012),  921-937.

\bibitem{Ran}
R. Randell,
\textit{Lattie-isotopic arrangements are topologically isomorphic},
P Am Math Soc \textbf{107} (1989),  555-559.

\bibitem{Ryb}
G. Rybnikov,
\textit{On the fundamental group of the complement of a complex hyperplane arrangement},
Funct Anal Appl \textbf{45} (2011),  137-148.


\bibitem{Wan}
S. Wang,S. S. T. Yau,
\textit{Rigidity of differentiable structure for new class of line arrangements},
Comm Anal Geom \textbf{13} (2005),  1057-1075.

\bibitem{Wan2}
S. Wang,S. S. T. Yau,
\textit{Diffeomorphic types of the complements of arrangements in $\mathbb{CP}^3$},
I: Point arrangements. J Math
Soc Japan \textbf{59} (2007),  423-227.

\bibitem{Wan3}
S. Wang,S. S. T. Yau,
\textit{The diffeomorphic types of the complements of arrangements in $\mathbb{CP}^3$},
II. Sci China Ser A \textbf{51} (2008),  785-802.


\bibitem{Yau}
S. S. T. Yau,
\textit{Diffeomorphic types of complements of nice point arrangements in $\mathbb{CP}^1$}. Sci China Ser A \textbf{52} (2009),  2774-2791.

\bibitem{Ye}
F. Ye,
\textit{Classification of moduli spaces of arrangements of 9 projective lines}. Pacific J Math \textbf{265} (2013),  243-256.



\end{thebibliography}
\end{document}